\newcommand{\mk}[1]{#1}
\newcommand{\fourier}{\mathcal{F}}
\newcommand{\cG}{\mathcal{G}}
\newcommand{\mellin}{\mathcal{M}}
\renewcommand{\qed}{\hfill$\sqcap\kern-8.0pt\hbox{$\sqcup$}$\\}
\DeclareMathOperator{\re}{Re}
\DeclareMathOperator{\im}{Im}
\renewcommand{\r}{\mathbb{R}}
\renewcommand{\c}{\mathbb{C}}
\renewcommand{\d}{\textnormal{d}}
\renewcommand{\i}{\textnormal{i}}
\newtheorem{theorem}{Theorem}
\newtheorem{lemma}{Lemma}
\newtheorem{proposition}{Proposition}
\newtheorem{corollary}{Corollary}
\newtheorem*{bochners_relation}{Bochner's relation}
\theoremstyle{definition}
\newtheorem{definition}{Definition}
\newtheorem{remark}{Remark}
\title{Fractional Laplace operator and Meijer G-function}
\author{
{Bart{\l}omiej Dyda\footnotemark[1] \footnotemark[3]} , \;  
{Alexey Kuznetsov\footnotemark[2] \footnotemark[4]} , \;
{Mateusz Kwa{\'s}nicki\footnotemark[1] \footnotemark[5]}
}
\date{\today}
\begin{document}
\maketitle
{
\renewcommand{\thefootnote}{\fnsymbol{footnote}}
\footnotetext[1]{Faculty of Pure and Applied Mathematics, Wroc{\l}aw University of Technology, ul. Wybrze{\.z}e Wyspia{\'n}skiego 27, 50-370 Wroc{\l}aw, Poland. Email: \{bartlomiej.dyda,mateusz.kwasnicki\}@pwr.edu.pl}
\footnotetext[2]{Dept. of Mathematics and Statistics,  York University,
4700 Keele Street, Toronto, ON, M3J 1P3, Canada.   Email: kuznetsov@mathstat.yorku.ca}
\footnotetext[3]{Supported by Polish National Science Centre (NCN) grant no. 2012/07/B/ST1/03356}
\footnotetext[4]{Research supported by the Natural Sciences and Engineering Research Council of Canada}
\footnotetext[5]{Supported by Polish National Science Centre (NCN) grant no. 2011/03/D/ST1/00311}
}

\begin{abstract}
We significantly expand the number of functions whose image under the fractional Laplace operator can be computed explicitly. In particular, we show that the fractional Laplace operator maps Meijer G-functions of $|x|^2$, or generalized hypergeometric functions of $-|x|^2$, multiplied by a solid harmonic polynomial, into the same class of functions. As one important application of this result, we produce a complete system of eigenfunctions of the operator $(1-|x|^2)_+^{\alpha/2} (-\Delta)^{\alpha/2}$ with the Dirichlet boundary conditions outside of the unit ball. The latter result will be used to estimate the eigenvalues of the fractional Laplace operator in the unit ball in a companion paper \cite{DKK_2015}.  
\end{abstract}
{\vskip 0.15cm}
 \noindent {\it Keywords}: Fractional Laplace operator, Riesz potential, Meijer G-function, hypergeometric function, Jacobi polynomial, harmonic polynomial, radial function
{\vskip 0.25cm}
 \noindent {\it 2010 Mathematics Subject Classification}:
   35S05; 
   33C20; 
   33C55  

\section{Introduction}

The fractional Laplace operator $(-\Delta)^{\alpha/2}$ is one of the most well-studied pseudo-differential operators. One reason for this lies in the multitude of important applications. This operator arises as a Markov generator of an isotropic stable process in $\r^d$, it is of increasing interest for the partial differential equations community and it is used in various areas of applied mathematics, see~\cite{bib:bv15,bib:ro15}. 
Another reason is that the fractional Laplace operator is also one of the simplest pseudo-differential operators, having symbol which is just a power function. In view of the simplicity and importance of this operator, it is surprising that there exist only a handful of functions for which the action of this operator can be computed explicitly. To the best of our knowledge, the following list provides a complete catalogue of known examples, up to translations, dilations \mk{and Kelvin transform} \mk{(as usual, we write $a_+$ for $\max(0, a)$)}:
\begin{enumerate}[label={\rm(\alph*)}]
\item Expressions for the \emph{harmonic measure} and the \emph{Green function} for a ball, as well as for the complement of the ball, were essentially established by M.~Riesz in~\cite{bib:r38a,bib:r38b} using Kelvin transform. For a formal derivation of the expression for the harmonic measure, see Section~IV.5 in~\cite{bib:l72}. The expression for the Green function in its most common form was obtained in~\cite{bib:bgr61}.
\item If $h$ is harmonic (that is, $\Delta h = 0$) in the unit ball and $f(x) = (1 - |x|^2)_+^{\alpha/2 - 1} h(x)$, then $(-\Delta)^{\alpha/2} f = 0$ in the unit ball, as it was proved in~\cite{bib:h94}, see also~\cite{bib:b99}.
\item The formula for $(-\Delta)^{\alpha/2} f$ for $f(x) = (1 - |x|^2)_+^\sigma$ with $\sigma = \tfrac{\alpha}{2}$ is essentially contained in~\cite{bib:r38a} and stated explicitly in~\cite{bib:g61}, while $\sigma = \tfrac{\alpha}{2} - 1$ is covered by the previous item. The case of general $\sigma$, as well as the function $g(x) = x_1 f(x)$, was studied in~\cite{bib:bik11,bib:d12}.
\item Similar results are easily found for half-space and for the complement of the unit ball by means of Kelvin transformation, see~\cite{bib:bz06,bib:l72,bib:r38a}
\item The results for functions supported in the full space $\r^d$ are more rare: a formula for $(-\Delta)^{\alpha/2} f(x)$ is known when $f(x) = e^{\i y \cdot x}$ (Fourier transform), $f(x) = |x|^{-a}$ with $a \in (0, d)$ (composition with Riesz kernels, \cite{bib:l72,Stein_book}), $f(x) = e^{-|x|^2}$ or $f(x) = (1 + |x|^2)^{-a}$ with $a = \tfrac{d + 1}{2}$ or $a = \tfrac{d - \alpha}{2} + n$, $n = 0, 1, \dots$ (see~\cite{Samko_book}). 
\end{enumerate}
Our goal in this paper is to extend the above list: We want to find more functions $f$ for which $(-\Delta)^{\alpha/2}f$ can be computed explicitly. Our main motivation for doing this comes from the study of spectral properties of the fractional Laplace operator acting on functions supported in the unit ball -- we consider this problem in a companion paper~\cite{DKK_2015}.

\mk{Our results are stated in terms of certain special functions: Meijer G-function, hypergeometric function and Jacobi polynomials. Before we introduce these notions formally, we summarise our findings. Our most general result (Theorems~\ref{thm:IVG} and~\ref{thm:DVG}) states that, under a number of conditions on the parameters $\alpha$, $l$, $m$, $n$, $p$, $q$, $\mathbf{a} = (a_1, \dots, a_p)$ $\mathbf{b} = (b_1, \dots, b_q)$ and the variable $x \in \r^d$, if
\begin{equation}\label{eq:VG}
f(x):=V(x) G^{mn}_{pq}\Big( 
\,\begin{matrix}
\mathbf{a} \\ \mathbf{b}
\end{matrix}\, \Big  \vert \, |x|^2 
\Big),
\end{equation}
then
\begin{equation}\label{eq:DVG}
(-\Delta)^{\alpha/2} f(x)=2^{\alpha} V(x) G^{m+1,n+1}_{p+2,q+2}\Big( 
\,\begin{matrix}
1-\tfrac{d + 2l + \alpha}{2}, & \mathbf{a}-\tfrac{\alpha}{2}, & -\tfrac{\alpha}{2} \\ 0, & \mathbf{b}-\tfrac{\alpha}{2}, &1-\tfrac{d + 2l}{2}
\end{matrix}\, \Big  \vert \, |x|^2 
\Big ) .
\end{equation}
Here $V$ is a solid harmonic polynomial of degree $l$ (one can take, for example, $V(x) = 1$ and $l = 0$, or $V(x) = x_1$ and $l = 1$) and $G^{mn}_{pq}(\mathbf{a}; \mathbf{b} | r)$ is the Meijer G-function. Essentially our assumptions only require that $(-\Delta)^{\alpha/2} f(x)$ is well defined as a singular integral, and we allow for any $\alpha > -d$.}

\mk{By specifying some or all of the parameters, we obtain expressions for $(-\Delta)^{\alpha/2} f(x)$ for a wide collection of functions $f(x) = V(x) \phi(|x|^2)$. For example, we can let $\phi(r) = r^\rho (1 + r)^\sigma$, $\phi(r) = r^\rho (1 - r)_+^\sigma$ or $\phi(r) = r^\rho (r - 1)_+^\sigma$ (Corollaries~\ref{cor:power} and~\ref{cor:powerball}). We also obtain an elegant expression for the generalised hypergeometric function: if $\mathbf{b}' = (b_1, \dots, b_{q-1})$ and
\begin{equation}\label{eq:VF}
f(x):=V(x) {_pF_q}\Big(\,\begin{matrix}
\multicolumn{2}{c}{\mathbf{a}} \\
\mathbf{b}', & \tfrac{d + 2l}{2}
\end{matrix}\, \Big \vert \, {-|x|^2} \Big) ,
\end{equation}
then
\begin{equation}\label{eq:DVF}
(-\Delta)^{\alpha/2} f(x)=
\frac{2^{\alpha} \prod\limits_{j = 1}^p \Gamma(a_j + \tfrac{\alpha}{2}) \prod\limits_{j = 1}^{q-1} \Gamma(b_j)}{\prod\limits_{j = 1}^p \Gamma(a_j) \prod\limits_{j = 1}^{q-1} \Gamma(b_j + \tfrac{\alpha}{2})} \, V(x) {_pF_q}\Big(\,\begin{matrix}
\multicolumn{2}{c}{\mathbf{a} + \tfrac{\alpha}{2}} \\
\mathbf{b}' + \tfrac{\alpha}{2}, & \tfrac{d + 2l}{2}
\end{matrix}\, \Big \vert \, {-|x|^2} \Big) 
\end{equation}
(Corollary~\ref{cor:DVF}). Finally, if $P^{(\alpha,\beta)}_n(r)$ is the Jacobi polynomial and
\begin{equation}\label{eq:VP}
 f(x) := (1 - |x|^2)_+^{\alpha/2} V(x) P^{(\alpha/2,d/2+l-1)}_n(2 |x|^2 - 1) ,
\end{equation}
then
\begin{equation}\label{eq:DVP}
 (-\Delta)^{\alpha/2} f(x) = \frac{2^\alpha \Gamma(1 + \tfrac{\alpha}{2} + n) \Gamma(\tfrac{d + 2 l + \alpha}{2} + n)}
{n! \, \Gamma(\tfrac{d + 2l}{2} + n)} \, V(x) P^{(\alpha/2,d/2+l-1)}_n(2 |x|^2 - 1)
\end{equation}
in the unit ball, that is, $(1 - |x|^2)_+^{\alpha/2} (-\Delta)^{\alpha/2} f(x)$ is a multiple of $f(x)$ (Theorem~\ref{thm:DVP}).}

\subsection{Riesz potential operator and the fractional Laplace operator}\label{subsection_Riesz_operator}

Everywhere in this paper we assume that $d \ge 1$ is the dimension. For $\alpha \in (0,d)$, the \emph{Riesz potential operator} is defined by
\begin{equation}\label{def_I_alpha}
 (-\Delta)^{-\alpha/2} f(x) := \frac{1}{\gamma_d(\alpha)}\int_{\r^d} \frac{f(x-y)}{|y|^{d - \alpha}}  \d y,
\end{equation}
where 
\begin{equation}
\gamma_{d}(\alpha):=\frac{2^\alpha \pi^{d/2} \Gamma(\tfrac{\alpha}{2})}{\Gamma(\tfrac{d - \alpha}{2})}.
\end{equation}
When $0<\alpha<2$ we define the {\it fractional Laplace operator} (also known as the \emph{fractional Riesz derivative}) as
\begin{equation}\label{def_fractional_Laplacian_0_alpha_2}
(-\Delta)^{\alpha/2}f(x):= \frac{1}{|\gamma_d(-\alpha)|}
\lim_{\varepsilon \to 0^+} \int_{\r^d \setminus B(0, \varepsilon)} 
 \frac{f(x)-f(x-y)}{|y|^{d+\alpha}} \d y,
\end{equation}
where $B(0,\varepsilon)$ denotes the ball of radius $\varepsilon$ centered at the origin. 
This definition can be extended to $\alpha \ge 2$, though it requires more work. Following~\cite{Samko_book}, we define the centered difference 
\[
\Delta^k_{y} f(x):=\sum\limits_{j=0}^k (-1)^{j} \tbinom{k}{j} f(x+(\tfrac{k}{2}-j)y)
\]
and introduce
\[
\chi_{d,k}(\alpha) := -\gamma_d(-\alpha) \sum\limits_{j=0}^k (-1)^j \tbinom{k}{j} |\tfrac{k}{2}-j|^{\alpha}.
\]
Choose $k$ to be an even integer. It is known that 
$\chi_{d,k}(\alpha)$ is analytic and zero-free in the region $0<\re(\alpha)<k$ (the poles of \mk{$\gamma_d(-\alpha)$} when $\alpha$ is an even integer are canceled by the corresponding zeros \mk{of the sum}, see Theorem~3.4 in~\cite{Samko_book}). We define the fractional Laplace operator as 
\begin{equation}\label{def_fractional_Laplacian}
(-\Delta)^{\alpha/2}f(x) := \frac{1}{\chi_{d,k}(\alpha)}
\lim_{\varepsilon \to 0^+} \int_{\r^d \setminus B(0, \varepsilon)} 
 \frac{-\Delta^k_y f(x)}{|y|^{d+\alpha}} \d y, 
\end{equation}
where $k$ is an even integer strictly greater than $\re(\alpha)$. When $0<\alpha<2$, this definition is easily seen to be equivalent to 
\eqref{def_fractional_Laplacian_0_alpha_2} (provided that $(1 + |y|)^{-d - \alpha} f(y)$ is in integrable). For a different representation of $(-\Delta)^{\alpha/2} f(x)$, based on the Pizzetti's formula, see formula~(1.1.10) in~\cite{bib:l72}.

For Schwartz functions $f$ the Riesz potential and the fractional Laplacian can be defined in an alternative way 
\begin{equation}\label{def_I_alpha_Fourier}
 (-\Delta)^{\alpha/2} f = \fourier_d^{-1} (\hat k_{\alpha} \fourier_d f),
\end{equation}
where $\hat k_{\alpha}(x)=|x|^{\alpha}$ and $\fourier_d$ denotes the Fourier transform in $\r^d$,
\begin{equation}\label{def_Fourier}
\fourier_d f(x):=\int_{\r^d} e^{\i x\cdot y} f(y) \d y.
\end{equation}
Formula \eqref{def_I_alpha_Fourier} shows that when $0<\re(\alpha)<d$, the fractional Laplace operator is the inverse of the Riesz potential operator for nice enough functions, \mk{which explains the notation $(-\Delta)^{-\alpha/2}$}. This is made precise in Theorem~3.24 in~\cite{Samko_book} in the setting of $L^p$ spaces, see also~\cite{bib:k:lap,Stein_book}. We will need the following pointwise version of this result.

\begin{proposition}[Proposition~7.2 in~\cite{bib:k:lap}]\label{prop_Delta_inverse_I_alpha}
Assume that $0 < \alpha < d$, $(1 + |y|)^{\alpha - d} g(y)$ is integrable and $f(x)=(-\Delta)^{-\alpha/2} g(x)$. If $g$ is continuous at some $x$, then $g(x) = (-\Delta)^{\alpha/2} f(x)$.
\end{proposition}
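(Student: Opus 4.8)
The plan is to show that the composition $(-\Delta)^{\alpha/2}(-\Delta)^{-\alpha/2}$ acts on $g$ as convolution against a scaled approximate identity, and then to use continuity of $g$ at the single point $x$. I would write the fractional Laplacian through its regularized singular integral: for $0 < \alpha < 2$ set
\[
(-\Delta)^{\alpha/2}_\varepsilon f(x) = \frac{1}{|\gamma_d(-\alpha)|}\int_{\r^d \setminus B(0,\varepsilon)} \frac{f(x) - f(x-y)}{|y|^{d+\alpha}}\, \d y
\]
(with the difference $\Delta^k_y f$ in place of $f(x)-f(x-y)$ when $2 \le \alpha < d$), so that $(-\Delta)^{\alpha/2} f(x) = \lim_{\varepsilon \to 0^+} (-\Delta)^{\alpha/2}_\varepsilon f(x)$. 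Substituting $f = (-\Delta)^{-\alpha/2} g$ and expanding both $f(x)$ and $f(x-y)$ via \eqref{def_I_alpha}, I would apply Fubini's theorem to interchange the $y$- and $z$-integrals, obtaining
\[
(-\Delta)^{\alpha/2}_\varepsilon f(x) = \int_{\r^d} g(x-z)\, K_\varepsilon(z)\, \d z, \qquad K_\varepsilon(z) = \frac{1}{|\gamma_d(-\alpha)|\,\gamma_d(\alpha)} \int_{\r^d \setminus B(0,\varepsilon)} \Big(\tfrac{1}{|z|^{d-\alpha}} - \tfrac{1}{|z-y|^{d-\alpha}}\Big) \frac{\d y}{|y|^{d+\alpha}} .
\]
Justifying this interchange is the first technical point; it should follow from the hypothesis that $(1+|y|)^{\alpha-d} g(y)$ is integrable, which makes $f$ well defined with controlled growth and renders the double integral absolutely convergent once $|y|$ is bounded away from $0$.

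Next I would analyse the kernel $K_\varepsilon$. A change of variables $y \mapsto \varepsilon u$ and $z \mapsto \varepsilon w$ shows that $K_\varepsilon(z) = \varepsilon^{-d} K_1(z/\varepsilon)$, so $\{K_\varepsilon\}$ is precisely the scaled family generated by the single kernel $K_1$. It then remains to prove that $K_1 \in L^1(\r^d)$ with $\int_{\r^d} K_1(w)\, \d w = 1$. Integrability near the origin is immediate since $|z|^{\alpha-d}$ is locally integrable; the delicate point is the behaviour at infinity, where the two terms $|w|^{\alpha-d}$ and $|w-u|^{\alpha-d}$ contribute at the same non-integrable leading order $|w|^{\alpha-d}$, so that their cancellation is what produces a genuinely integrable remainder. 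To fix the total mass I would pass to the Fourier side: the Riesz kernel has transform $|\xi|^{-\alpha}$, while the regularized difference kernel has transform converging to $|\xi|^\alpha$, so that $\widehat{K_\varepsilon}(\xi) = \widehat{K_1}(\varepsilon\xi) \to 1$ for each fixed $\xi$, forcing $\widehat{K_1}(0) = \int_{\r^d} K_1 = 1$.

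Finally I would run the approximate-identity argument at the point $x$. Using $\int K_\varepsilon = 1$, I write $(-\Delta)^{\alpha/2}_\varepsilon f(x) - g(x) = \int_{\r^d} (g(x-z) - g(x)) K_\varepsilon(z)\, \d z$ and split into the regions $|z| < \delta$ and $|z| \ge \delta$. On the inner region, continuity of $g$ at $x$ keeps $|g(x-z) - g(x)|$ small while $\int |K_\varepsilon|$ stays bounded by $\|K_1\|_{L^1}$; on the outer region, the scaling $K_\varepsilon(z) = \varepsilon^{-d} K_1(z/\varepsilon)$ combined with the decay of $K_1$ and the integrability weight $(1+|y|)^{\alpha-d}$ on $g$ drives the contribution to $0$ as $\varepsilon \to 0^+$. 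Passing to the limit then yields $(-\Delta)^{\alpha/2} f(x) = g(x)$.

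I expect the main obstacle to be the decay estimate for $K_1$ at infinity: proving that the leading $|w|^{\alpha-d}$ contributions of the two terms cancel, and quantifying the remaining decay precisely enough — in conjunction with the growth control on $g$ supplied by the integrability hypothesis — to annihilate the outer-region tail. Treating the case $\alpha \ge 2$ through the higher-order difference $\Delta^k_y$ adds bookkeeping but introduces no essential new difficulty, and the Fourier identifications must be read distributionally, since the kernels involved are only tempered distributions.
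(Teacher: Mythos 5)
The paper offers no proof of this proposition --- it is imported verbatim from Proposition~7.2 of \cite{bib:k:lap} --- so there is no internal argument to compare against; your plan is, however, the standard route and essentially the one followed in that reference: the truncated operator composed with the Riesz potential is convolution with a dilation family $K_\varepsilon(z)=\varepsilon^{-d}K_1(z/\varepsilon)$, and one checks that $K_1$ is integrable with unit mass before running the approximate-identity argument at the point of continuity. Two points deserve explicit attention. First, the Fubini step requires absolute convergence of the Riesz potential of $|g|$ at the specific point $x$; this is not an automatic consequence of the weighted integrability alone, but it does follow from continuity of $g$ at $x$ (hence local boundedness of $g$ near $x$) combined with the hypothesis that $(1+|y|)^{\alpha-d}g(y)$ is integrable, and you should say so. Second --- the obstacle you single out --- the decay of $K_1$ at infinity comes from the fact that the Riesz kernel is annihilated by the \emph{untruncated} principal-value operator away from the origin:
\[
\mathrm{p.v.}\int_{\r^d}\bigl(|w|^{\alpha-d}-|w-u|^{\alpha-d}\bigr)\,|u|^{-d-\alpha}\,\d u=0,\qquad w\ne 0 .
\]
Subtracting this identity from the defining integral of $K_1$ converts the tail $\int_{|u|>1}$ into $-\mathrm{p.v.}\int_{|u|<1}$, where a second-order Taylor expansion of $|w-u|^{\alpha-d}$ in $u$ (or a $k$-th order one with $k>\alpha$ when $\alpha\ge 2$, matching the higher-order differences) gives $|K_1(w)|\le C|w|^{\alpha-d-2}$ for large $|w|$; this single estimate is what makes $\|K_1\|_{L^1}$ finite and drives the outer-region contribution to $0$ like $\varepsilon^{2-\alpha}$ against the weight $(1+|z|)^{\alpha-d}$. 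With that supplied, the Fourier identification of the total mass and the final limiting argument go through as you describe, so the proposal is sound.
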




\subsection{Hypergeometric function}

\mk{Let $p$, $q$ be nonnegative integers such that $p \le q + 1$, and let $\mathbf{a} = (a_1, \dots, a_p) \in \c^p$ and $\mathbf{b} = (b_1, \dots, b_q) \in \c^q$. The generalised hypergeometric function is defined as the hypergeometric series
\[
{_pF_q}\Big(\,\begin{matrix}
\mathbf{a} \\
\mathbf{b}
\end{matrix}\, \Big \vert \, r \Big) := \sum_{n = 0}^\infty
\frac{\prod\limits_{j = 1}^p a_j^{(n)}}{\prod\limits_{j = 1}^q b_j^{(n)}} 
\, \frac{r^n}{n!} \, ,
\]
as long as none of the parameters $b_j$ is a non-positive integer; here $c^{(n)} = c (c + 1) \dots (c + n - 1)$ denotes the rising factorial. If $p \le q$, then the above series is convergent for all $r \in \c$ and ${_pF_q}$ is an entire function. In the important case $p = q + 1$, the series converges when $|r| < 1$, but ${_pF_q}$ extends to an analytic function in $\c \setminus [1, \infty)$.}

The regularised hypergeometric function $_p\tilde{F}_q$ is defined as
\begin{equation*}
{_p\tilde F_q}\Big(\,\begin{matrix}
\mathbf{a} \\
\mathbf{b}
\end{matrix}\, \Big \vert \, r \Big) :=
\frac{1}{\prod\limits_{j = 1}^q \Gamma(b_j)} \, {_pF_q}\Big(\,\begin{matrix}
\mathbf{a} \\
\mathbf{b}
\end{matrix}\, \Big \vert \, r \Big) .
\end{equation*}
Here none of $b_j$ is a non-positive integer, but ${_p\tilde{F}_q}$ extends analytically to arbitrary values of~$b_j$. For more information on hypergeometric functions, see~\cite{bib:gr07,Prudnikov_V3}.

\subsection{Meijer G-function}\label{subsection_Meijer}

In this section we define the Meijer G-function \mk{and discuss some of its properties}. We begin with four non-negative integers $m$, $n$, $p$ and $q$ and two vectors $\mathbf{a}=(a_1,\dots,a_p) \in \c^p$ and $\mathbf{b}=(b_1,\dots,b_q) \in \c^q$, and define  
\begin{equation}\label{eq:MG}
\cG^{mn}_{pq}\Big( 
\,\begin{matrix}
\mathbf{a} \\ \mathbf{b}
\end{matrix}\, \Big  \vert \, s 
\Big ) : =  \frac{\prod\limits_{j=1}^m \Gamma(b_j+s) \prod\limits_{j=1}^n \Gamma(1-a_j-s)}
{\prod\limits_{j=m+1}^q \Gamma(1-b_j-s) \prod\limits_{j=n+1}^p \Gamma(a_j+s)}. 
\end{equation}
This will serve as the Mellin transform of the Meijer G-function. We denote 
\begin{equation}\label{eq:nu}
 \nu = \sum_{j=1}^p \re(a_j)-\sum_{j=1}^q \re(b_j)
\end{equation}
and
\begin{equation}\label{def_bar_a}
\begin{aligned}
\underline{b}:=\min\limits_{1\le j \le m} \re(b_j), &\qquad& \overline{a}:=\max\limits_{1\le j \le n} \re(a_j) ;
\end{aligned}
\end{equation}
we set $\underline{b}=+\infty$ \mk{if $m = 0$} and $\overline{a}=-\infty$ \mk{if $n = 0$}. 

\mk{Throughout the entire article, when speaking about the Meijer G-function,} we will \mk{tacitly} assume that $b_i - a_j$ is not a positive integer for $i = 1, \dots, m$ and $j = 1, \dots, n$, so that no pole of $\Gamma(b_i+s)$ coincides with a pole of $\Gamma(1 - a_j - s)$; \mk{otherwise, the Meijer G-function is not defined. We will also always assume that $p + q \le 2m + 2n$, so that there are at least as many gamma functions in the numerator of~\eqref{eq:MG} as there are in the denominator.} We also introduce the following five conditions on parameters $m$, $n$, $p$, $q$, $\mathbf{a}$ and $\mathbf{b}$ that will be required for some statements:
\begin{equation*}
\begin{aligned}
&\text{Condition~S:} && 1 - \overline{a} > -\underline{b}, \\
&\text{Condition A:} && p+q<2m+2n, \\
&\text{Condition B:} && p+q=2m+2n , \; p = q \\
&\text{Condition C:} && p+q=2m+2n , \; p < q \\
&\text{Condition D:} && p+q=2m+2n , \; p > q .
\end{aligned}
\end{equation*}
Note that Conditions~A through~D are mutually exclusive. \mk{Before we define the Meijer G-function, we discuss the role of the above conditions}.

Condition~S is needed because it separates the poles of $\Gamma(b_j+s)$ from the poles of $\Gamma(1-a_j-s)$ in the numerator in \eqref{eq:MG}, thus the function $\cG^{mn}_{pq}( \mathbf{a} ; \mathbf{b} | s)$ is analytic in $s$ in the strip $-\underline b<\re(s)<1-\overline a$. By Stirling's asymptotic formula for the gamma function, for $\lambda \in \r$,
\[
 \lim_{t \to \pm \infty} \frac{e^{|t| \pi / 2} |\Gamma(\lambda + \i t)|}{|t|^{\lambda - 1/2}} = \sqrt{2 \pi}
\]
(see formula~8.328.1 in~\cite{bib:gr07}). Therefore, Condition~A ensures that $\cG^{mn}_{pq}( \mathbf{a} ; \mathbf{b} | s)$ converges to zero exponentially fast as $|\im s| \to \infty$ within the strip $-\underline b<\re(s)<1-\overline a$. When Condition~B, C or~D is satisfied, then the exponential parts of the gamma functions cancel out, and one can check that for every $\varepsilon > 0$ the function $|\cG^{mn}_{pq}( \mathbf{a} ; \mathbf{b} | s)|$ is of smaller order than $|\im s|^{-\nu - (p - q) (\lambda - 1/2) + \varepsilon}$ as $|\im s| \to \infty$ along the line $\lambda + \i \r$. In particular, this function is integrable if $\nu - (p - q) (\lambda - \tfrac{1}{2}) > 1$.

\begin{definition}
Assume that parameters $m$, $n$, $p$, $q$, $\mathbf{a}$ and $\mathbf{b}$ satisfy Condition~S and either of Conditions~A through~D. \mk{Suppose in addition that: $\nu > 1$ if Condition~B is satisfied; $\nu > 1 + (q - p) (-\underline{b} - \tfrac{1}{2})$ if Condition~C holds; and $\nu > 1 - (p - q) (\tfrac{1}{2} - \overline{a})$ if Condition~D holds}. We define the \emph{Meijer G-function} as \mk{the inverse Mellin transform}
\begin{equation}\label{def_Meijer_G}
G^{mn}_{pq}\Big( 
\,\begin{matrix}
\mathbf{a} \\ \mathbf{b}
\end{matrix}\, \Big \vert \, r
\Big ):=\frac{1}{2\pi \i} \int_{\lambda + \i \r} 
\cG^{mn}_{pq}\Big( 
\,\begin{matrix}
\mathbf{a} \\ \mathbf{b}
\end{matrix}\, \Big  \vert \, s 
\Big ) r^{-s} \d s,  
\end{equation}
where $r>0$ and $\lambda \in (-\underline{b}, 1-\overline{a})$; \mk{if Condition~C or~D is satisfied, we also require that $\nu + (p - q) (\lambda - \tfrac{1}{2}) > 1$}. 
\end{definition}

\mk{For any $\lambda$ as in the above definition, the Meijer G-function is defined as the inverse Mellin transform of an absolutely integrable function along the line $\lambda + \i \r$. In particular, the Meijer G-function is then bounded by $C r^{-\lambda}$ for some $C > 0$. Therefore, if we denote
\begin{equation}\label{eq:indices}
\begin{aligned}
 \underline{\lambda} &= \begin{cases} -\underline{b} & \text{under Conditions~A, B or C} \\ \max(-\underline{b}, \tfrac{1}{2} - \tfrac{\nu - 1}{p - q}) & \text{under Condition~D}, \end{cases} \\
 \overline{\lambda} &= \begin{cases} 1 - \overline{a} & \text{under Conditions~A, B or D} \\ \min(1 - \overline{a}, \tfrac{1}{2} + \tfrac{\nu - 1}{q - p}) & \text{under Condition~C}, \end{cases}
\end{aligned}
\end{equation}
then we immediately have that for any $\varepsilon>0$,
\begin{equation}\label{eq:G:asymp}
 G^{mn}_{pq}\Big( 
\,\begin{matrix}
\mathbf{a} \\ \mathbf{b}
\end{matrix}\, \Big  \vert \, r 
\Big )= 
\begin{cases}
O(r^{-\underline{\lambda}-\varepsilon}) & \text{as $r\to 0^+$,}\\
O(r^{-\overline{\lambda}+\varepsilon}) & \text{as $r\to +\infty$.}
\end{cases}
\end{equation}
Intuitively, the parameter $\nu$ describes the regularity of the Meijer G-function near $1$ when Condition~B is satisfied, its oscillations at $\infty$ when Condition~C holds, and its oscillations near $0$ when Condition~D holds. Note that in each case the corresponding regularity improves as $\nu$ increases.}

\mk{The following transformation rules also follow easily from the definition as the inverse Mellin transform:
\begin{align}
\label{eq:G:shift}
r^c \, G^{mn}_{pq}\Big( 
\,\begin{matrix}
\mathbf{a} \\ \mathbf{b}
\end{matrix}\, \Big  \vert \, r 
\Big ) & = G^{mn}_{pq}\Big( 
\,\begin{matrix}
\mathbf{a}+c \\ \mathbf{b}+c
\end{matrix}\, \Big  \vert \, r 
\Big ), \\
\label{eq:G:inverse}
G^{mn}_{pq}\Big( 
\,\begin{matrix}
\mathbf{a} \\ \mathbf{b}
\end{matrix}\, \Big  \vert \, r^{-1}
\Big ) & = G^{nm}_{qp}\Big( 
\,\begin{matrix}
1-\mathbf{b} \\
1- \mathbf{a}
\end{matrix}\, \Big  \vert \, r
\Big ), \\
\label{reduction1}
G^{mn}_{pq}\Big(
\,\begin{matrix}
c, & \mathbf{a}' \\ \mathbf{b}', & c
\end{matrix}\, \Big  \vert \, r
\Big ) & = G^{m,n-1}_{p-1,q-1}\Big( 
\,\begin{matrix}
\mathbf{a}' \\ \mathbf{b}'
\end{matrix}\, \Big  \vert \, r 
\Big ), \\
\label{reduction2}
G^{mn}_{pq}\Big(
\,\begin{matrix}
\mathbf{a}', & c \\ c, & \mathbf{b}'
\end{matrix}\, \Big  \vert \, r
\Big ) & = G^{m-1,n}_{p-1,q-1}\Big( 
\,\begin{matrix}
\mathbf{a}' \\ \mathbf{b}'
\end{matrix}\, \Big  \vert \, r 
\Big ).
\end{align}
where $\mathbf{a}' = (a_1, \dots, a_{p-1})$ and $\mathbf{b}' = (b_1, \dots, b_{q-1})$.}

The above definition of Meijer G-function is not the most general possible. One \mk{can} relax Condition~S and restrictions on $\nu$ by appropriately deforming the contour of integration in~\eqref{def_Meijer_G}, see Chapter~8.2 in~\cite{Prudnikov_V3} for more details. Another way of extending the definition of Meijer G-function is provided by expansion in terms of generalised hypergeometric functions, \mk{described briefly below}.

\mk{Assume that $b_j-b_k$ is not an integer for $1\le j<k \le m$. If $p<q$ or $p=q$ and $|r|<1$, then we have
\begin{equation}\label{eq:G:F}
\begin{aligned}
G^{mn}_{pq}\Big( 
\,\begin{matrix}
\mathbf{a}  \\ \mathbf{b} 
\end{matrix}\, \Big  \vert \, r 
\Big )& =
\sum\limits_{k=1}^m 
\frac{\pi^{m-1} \prod\limits_{j=1}^n \Gamma(1+b_k-a_j)}
{\smash{\prod\limits_{\substack{1\le j \le m \\ j\ne k}} \sin((b_j-b_k) \pi)} \prod\limits_{j=n+1}^p \Gamma(a_j-b_k)} \times \\
& \hspace*{13em} \times 
r^{b_k}{_p\tilde{F}_{q-1}}
\Big( 
\,\begin{matrix}
1 + b_k - \mathbf{a} \\ 1 + b_k - \mathbf{b}'_k 
\end{matrix}\, \Big  \vert \, (-1)^{p-m-n} r 
\Big ), 
\end{aligned}
\end{equation}
where $\mathbf{b}'_k = (b_1, \dots, b_{k-1}, b_{k+1}, \dots, b_q)$, see~\cite{bib:gr07,bib:ms73}. If $p>q$ or $p=q$ and $|r|>1$, the corresponding representation of Meijer G-function in terms of  ${}_q\tilde{F}_{p-1}$ functions can be obtained using~\eqref{eq:G:inverse} and~\eqref{eq:G:F}. Furthermore, if $b_j = b_k$ for some $j \ne k$, a similar, but much more complicated expansion holds true, see Chapter~V in~\cite{bib:ms73}.}

\mk{The Meijer G-function, whenever defined, is analytic in all parameters $a_j$ and $b_j$, and it has the properties~\eqref{eq:G:asymp} through~\eqref{reduction2}}. Under Condition~A, formula~\eqref{def_Meijer_G} in fact defines the Meijer G-function as an analytic function in a sector $|\arg r|<(m+n-(p+q)/2) \pi$. When condition~B holds true (with arbitrary $\nu$), the Meijer G-function extends to an analytic function in the unit disk $|r| < 1$ and in the complement of the unit disk $|r| > 1$. \mk{Under Conditions~C and~D (or more generally, when $p \ne q$), the Meijer G-function is analytic in $|\arg r| < \infty$, the Riemann surface of the logarithm. For a comprehensive description of the Meijer G-function, we refer to~\cite{bib:ms73} and~\cite{Prudnikov_V3}.}

\subsection{Main results}\label{subsection_Main_results}

We define \emph{solid harmonic polynomials} of order $l$ to be polynomials $V(x)$ in $\r^d$ homogeneous of degree $l$, which satisfy $\Delta V=0$. In dimension $d=1$ there are only two solid harmonic polynomials, $V(x)\equiv 1$ and $V(x)\equiv x$. \mk{When $d = 2$, solid harmonic polynomials can be obtained as real or imaginary parts of monomials (if $\r^2$ is identified with $\c$).} 
The main idea behind our results is stated in the following proposition\mk{, which is a pointwise version of an $L^2$ result given as Lemma~24.8 in~\cite{bib:r96}}. 

\begin{proposition}
\label{prop_riesz_mellin}
Let $V(x)$ be a solid harmonic polynomial of degree $l\ge 0$ and $\delta = d + 2 l$. Suppose that
\[
f(x) = V(x) \phi(|x|^2) ,
\]
where $\phi$ is given as the (absolutely convergent) inverse Mellin transform
\begin{equation}
\label{eqn_inverse_mellin}
\phi(r) := \frac{1}{2 \pi \i} \int_{\lambda + \i \r} \mellin \phi(s) r^{-s} \d s
\end{equation}
for some $\lambda \in \r$. If
\begin{equation}
\label{eqn_riesz_mellin_condition}
0 < \alpha < 2 \lambda - l < d ,
\end{equation}
then \mk{the Riesz potential} $(-\Delta)^{-\alpha/2} f(x)$ is well-defined for $x \ne 0$, and
\[
(-\Delta)^{-\alpha/2} f(x) = V(x) \psi(|x|^2),
\]
where
\begin{equation}
\label{eqn_riesz_mellin}
\psi(r) := \frac{1}{2 \pi \i} \int_{\lambda - \alpha/2 + \i \r} \frac{\Gamma(s) \Gamma(\tfrac{\delta - \alpha}{2} - s)}{2^\alpha \Gamma(\tfrac{\alpha}{2} + s) \Gamma(\tfrac{\delta}{2} - s)} \, \mellin \phi(s + \tfrac{\alpha}{2}) r^{-s} \d s .
\end{equation}
\end{proposition}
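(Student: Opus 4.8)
The plan is to reduce the statement to a single computation for the pure power functions $V(x)|x|^{-2s}$ and then to assemble the general case by integrating over the Mellin contour. First I would insert the inverse Mellin representation \eqref{eqn_inverse_mellin} of $\phi$ into $f(x) = V(x)\phi(|x|^2)$, so that
\[
f(x) = \frac{1}{2\pi\i}\int_{\lambda + \i\r} \mellin\phi(s)\, V(x)|x|^{-2s}\,\d s ,
\]
and apply $(-\Delta)^{-\alpha/2}$ term by term. The point enabling this is that, on the line $\re(s) = \lambda$, one has $\bigl|V(x-y)|x-y|^{-2s}\bigr| = |V(x-y)|\,|x-y|^{-2\lambda}$, which is independent of $\im(s)$; hence the double integral obtained after writing the Riesz potential as a convolution factors as $\bigl(\int_{\r^d} |V(x-y)|\,|x-y|^{-2\lambda}|y|^{\alpha-d}\,\d y\bigr)\bigl(\int_{\lambda+\i\r}|\mellin\phi(s)|\,|\d s|\bigr)$. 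Condition~\eqref{eqn_riesz_mellin_condition} is exactly what makes the first factor finite for $x\ne 0$: the homogeneity degree $l - 2\lambda$ of $V(x)|x|^{-2\lambda}$ guarantees integrability of the convolution near $y=x$ (because $2\lambda - l < d$) and at infinity (because $\alpha < 2\lambda - l$), while $0<\alpha<d$ is needed for the Riesz kernel itself. Absolute convergence then legitimises Fubini and the termwise application of the operator.

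The heart of the matter is the following explicit formula, which I would isolate as a lemma: for $0<\alpha<2\re(s)-l<d$ and $\delta = d + 2l$,
\[
(-\Delta)^{-\alpha/2}\bigl[V(x)|x|^{-2s}\bigr] = \frac{\Gamma(s-\tfrac{\alpha}{2})\,\Gamma(\tfrac{\delta}{2}-s)}{2^\alpha\,\Gamma(s)\,\Gamma(\tfrac{\delta+\alpha}{2}-s)}\, V(x)|x|^{-2s+\alpha} .
\]
The shape of the right-hand side is forced by two structural facts: the Riesz potential is convolution against a radial kernel, so it commutes with $O(d)$, and it carries homogeneous functions of degree $\mu$ into homogeneous functions of degree $\mu+\alpha$. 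Since the solid harmonics of degree $l$ form an irreducible $O(d)$-module, Schur's lemma forces $(-\Delta)^{-\alpha/2}[V(x)|x|^{-2s}]$ to equal $c(s)\,V(x)|x|^{-2s+\alpha}$ for a scalar $c(s)$ depending only on $s$, $\alpha$, $l$ and $d$. To pin down $c(s)$ I would invoke Bochner's relation: since the multiplier $|\xi|^{-\alpha}$ is the same radial function in every dimension, $(-\Delta)^{-\alpha/2}$ in $\r^d$ restricted to the degree-$l$ sector coincides with the purely radial $(-\Delta)^{-\alpha/2}$ acting in dimension $\delta = d+2l$; thus $c(s)$ is the radial Riesz symbol in dimension $\delta$. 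The latter is classical and follows from the composition rule $(-\Delta)^{-\alpha/2}(-\Delta)^{-\beta/2} = (-\Delta)^{-(\alpha+\beta)/2}$ applied to the Riesz kernel, which produces precisely the stated ratio of gamma functions with $d$ replaced by $\delta$.

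Finally I would substitute the lemma into the termwise expansion and change variables so that the contour $\lambda + \i\r$ is shifted to $\lambda - \tfrac{\alpha}{2} + \i\r$ and $\mellin\phi(s)$ is replaced by $\mellin\phi(s+\tfrac{\alpha}{2})$; collecting the gamma factors reproduces exactly the kernel in \eqref{eqn_riesz_mellin} and yields $(-\Delta)^{-\alpha/2}f(x) = V(x)\psi(|x|^2)$. The main obstacle I anticipate is not this bookkeeping but the rigorous justification of the key lemma for the non-integrable, non-Schwartz power functions $V(x)|x|^{-2s}$: Bochner's relation and the composition rule must be applied in a range of $s$ where the integrals genuinely converge, and some care — either a direct Funk--Hecke computation of the convolution, or an analytic-continuation argument in $s$ combined with uniqueness of the homogeneous-distribution Fourier transform — is needed to certify the gamma-ratio constant. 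Once the lemma is secured on the strip $0<\alpha<2\re(s)-l<d$, the Fubini argument above transfers it to the general $\phi$ with no further difficulty.
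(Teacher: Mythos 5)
Your proposal is correct and relies on essentially the same ingredients as the paper's proof: the gamma-ratio constant comes from the Riesz composition identity $\int_{\r^d} |x-y|^{\alpha-d}|y|^{\beta-d}\,\d y = \tfrac{\gamma_d(\alpha)\gamma_d(\beta)}{\gamma_d(\alpha+\beta)}|x|^{\alpha+\beta-d}$ (formula~\eqref{eqn_riesz_semigroup}), the harmonic factor $V$ is removed via Bochner's relation (which the paper isolates as Proposition~\ref{prop_Bochner}, proved by approximation from the Schwartz case, so the rigor gap you flag for non-Schwartz powers is handled there), and Fubini is justified exactly as you describe. The only difference is the order of the two reductions --- the paper first passes to the radial case in dimension $\delta=d+2l$ and then integrates over the Mellin contour, whereas you integrate over the contour first and apply Bochner/Schur to each power $V(x)|x|^{-2s}$ --- and this reordering is immaterial.
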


\noindent
In other words, the Mellin transform of $\psi$ satisfies
\[
\mellin \psi(s) = \frac{\Gamma(s) \Gamma(\tfrac{\delta - \alpha}{2} - s)}{2^\alpha \Gamma(\tfrac{\alpha}{2} + s) \Gamma(\tfrac{\delta}{2} - s)} \, \mellin \phi(s + \tfrac{\alpha}{2}) \, .
\]
The above formula fits perfectly into the world of Meijer G-function: if $\phi$ is a Meijer G-function, then so is $\psi$.
We recall that $\overline{a}$ and $\underline{b}$ are defined in 
\eqref{def_bar_a}, \mk{while $\overline{\lambda}$ and $\underline{\lambda}$ are defined in~\eqref{eq:indices}}. The following two theorems are our main results about Meijer G-function. \mk{For technical reasons, we consider the Riesz potential and the fractional Laplace operators separately.}

\begin{theorem}\label{thm:IVG}
Let $V(x)$ be a solid harmonic polynomial of degree $l\ge 0$ and $\delta = d + 2 l$. 
Assume that $0<\alpha<d$ and
parameters $m$, $n$, $p$, $q$, $\mathbf{a}$ and $\mathbf{b}$ satisfy \mk{Condition~A}, as well as
\begin{equation}\label{eq:IVG:cond}
\begin{aligned}
2 (1 - \overline{a}) & > \alpha + l , &\qquad -2 \underline{b} & < d + l .
\end{aligned}
\end{equation}
Define
$
f(x):=V(x) G^{mn}_{pq}\Big( 
\,\begin{matrix}
\mathbf{a} \\ \mathbf{b}
\end{matrix}\, \Big  \vert \, |x|^2 
\Big).$
Then
\begin{equation}\label{eq:DVG:I}
(-\Delta)^{-\alpha/2} f(x)=2^{-\alpha} V(x) G^{m+1,n+1}_{p+2,q+2}\Big( 
\,\begin{matrix}
1-\tfrac{\delta - \alpha}{2}, &\mathbf{a}+\tfrac{\alpha}{2}, & \tfrac{\alpha}{2} \\ 0, & \mathbf{b}+\tfrac{\alpha}{2}, &1-\tfrac{\delta}{2}
\end{matrix}\, \Big  \vert \, |x|^2 
\Big )
\end{equation}
for all $x \ne 0$. \mk{The same statement holds under Conditions~B, C and~D, provided that
\begin{equation}\label{eq:IVG:cond2}
\begin{aligned}
2 \overline{\lambda} & > \alpha + l , &\qquad 2 \underline{\lambda} & < d + l ;
\end{aligned}
\end{equation}
if Condition~B is satisfied, we additionally require that $\nu > 0$ and either $|x| \ne 1$ or $\nu + \alpha > 1$.}
\end{theorem}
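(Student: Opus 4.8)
The plan is to reduce the whole statement to a single application of Proposition~\ref{prop_riesz_mellin}, since the right-hand side of the asserted formula is essentially forced by the Mellin-transform identity recorded there. Writing $\phi(r) = G^{mn}_{pq}(\mathbf{a};\mathbf{b}\,|\,r)$, the function $\phi$ is by definition the inverse Mellin transform of $\mellin\phi(s) = \cG^{mn}_{pq}(\mathbf{a};\mathbf{b}\,|\,s)$ along any admissible vertical line, so $f(x) = V(x)\phi(|x|^2)$ has exactly the structure required by the proposition, and the task is to run that machine and simplify the output.

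First I would verify the hypotheses of Proposition~\ref{prop_riesz_mellin}, namely that $\phi$ admits an absolutely convergent Mellin representation along a line $\lambda+\i\r$ with $0 < \alpha < 2\lambda - l < d$. Rewriting the latter as $\lambda \in (\tfrac{\alpha+l}{2}, \tfrac{d+l}{2})$, I must exhibit such a $\lambda$ that also lies in the strip of admissible contours for the Meijer G-function: under Condition~A this strip is $(-\underline b, 1-\overline a)$, while under Conditions~B, C, D it is the possibly narrower interval $(\underline\lambda, \overline\lambda)$ from~\eqref{eq:indices}. In either case the inequalities~\eqref{eq:IVG:cond} (respectively~\eqref{eq:IVG:cond2}) are precisely what forces the intersection of these two intervals to be nonempty, and the exponential decay granted by Condition~A, or the polynomial decay under B/C/D together with the stated $\nu$-restrictions, guarantees absolute convergence along the chosen line.

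Next I would apply the proposition and carry out the gamma-function bookkeeping. Substituting $\mellin\phi(s+\tfrac{\alpha}{2}) = \cG^{mn}_{pq}(\mathbf{a};\mathbf{b}\,|\,s+\tfrac{\alpha}{2})$ into the formula for $\mellin\psi$, the prefactor $\Gamma(s)\Gamma(\tfrac{\delta-\alpha}{2}-s)/(\Gamma(\tfrac{\alpha}{2}+s)\Gamma(\tfrac{\delta}{2}-s))$ contributes exactly four extra gamma factors, corresponding to the new parameters $b=0$ and $a=1-\tfrac{\delta-\alpha}{2}$ in the numerator and $a=\tfrac{\alpha}{2}$, $b=1-\tfrac{\delta}{2}$ in the denominator, whereas the shift $s\mapsto s+\tfrac{\alpha}{2}$ turns $\mathbf{a},\mathbf{b}$ into $\mathbf{a}+\tfrac{\alpha}{2},\mathbf{b}+\tfrac{\alpha}{2}$. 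Matching indices against the definition~\eqref{eq:MG} gives, term by term,
\[
2^{\alpha}\,\mellin\psi(s) = \cG^{m+1,n+1}_{p+2,q+2}\Big(\,\begin{matrix} 1-\tfrac{\delta-\alpha}{2}, & \mathbf{a}+\tfrac{\alpha}{2}, & \tfrac{\alpha}{2} \\ 0, & \mathbf{b}+\tfrac{\alpha}{2}, & 1-\tfrac{\delta}{2}\end{matrix}\,\Big\vert\,s\Big),
\]
so inverting the Mellin transform yields the claimed expression for $(-\Delta)^{-\alpha/2}f$.

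The last and most delicate step is to confirm that this output is a bona fide Meijer G-function, so that the defining inverse Mellin transform actually converges along the line $\lambda-\tfrac{\alpha}{2}+\i\r$. Here I would note that $p+q$ and $2m+2n$ both increase by $4$, so $p-q$ and the Condition (A/B/C/D) type are preserved, while a short computation gives $\underline b' = \min(0, \underline b + \tfrac{\alpha}{2})$, $\overline a' = \max(1-\tfrac{\delta-\alpha}{2}, \overline a + \tfrac{\alpha}{2})$, and $\nu' = \nu + (p-q)\tfrac{\alpha}{2} + \alpha$; Condition~S for the new parameters, $1-\overline a' > -\underline b'$, then follows from~\eqref{eq:IVG:cond}/\eqref{eq:IVG:cond2} together with $\alpha < \delta$. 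I expect the real obstacle to be the borderline Condition~B case: there the defining Mellin integral for $G^{mn}_{pq}$ converges only when $\nu > 1$, whereas the theorem assumes merely $\nu > 0$, so Proposition~\ref{prop_riesz_mellin} cannot be invoked directly. For $|x|\neq 1$ one must instead exploit that under Condition~B the Meijer G-function extends analytically across $|r|=1$ for arbitrary $\nu$, prove the identity first where $\nu > 1$, and continue analytically; on the circle $|x|=1$ the extra hypothesis $\nu+\alpha>1$ is exactly the convergence condition $\nu' > 1$ for the output function. Threading these overlapping regimes, rather than the gamma-function algebra, is where the care is needed.
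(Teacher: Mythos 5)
Your overall strategy --- finding a common contour for Proposition~\ref{prop_riesz_mellin}, shifting $s \mapsto s + \tfrac{\alpha}{2}$, absorbing the four extra gamma factors into the new parameters, and handling the borderline Condition~B case ($0 < \nu \le 1$) by proving the identity first for $\nu > 1$ and continuing analytically --- is exactly the paper's, and your gamma-function bookkeeping and the computation $\nu' = \nu + (p-q)\tfrac{\alpha}{2} + \alpha$ are correct. There is, however, a genuine gap in your first step. You assert that the inequalities~\eqref{eq:IVG:cond} are ``precisely what forces'' the intersection $(-\underline{b}, 1 - \overline{a}) \cap (\tfrac{\alpha+l}{2}, \tfrac{d+l}{2})$ to be nonempty. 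They are not: \eqref{eq:IVG:cond} only gives $1 - \overline{a} > \tfrac{\alpha+l}{2}$ and $-\underline{b} < \tfrac{d+l}{2}$, which is perfectly compatible with $-\underline{b} \ge 1 - \overline{a}$, i.e.\ with the failure of Condition~S. (Example: $d = 1$, $l = 0$, $\alpha = \tfrac{1}{2}$, $m = n = p = q = 1$, $\re a_1 = 0.7$, $\re b_1 = -0.4$; then \eqref{eq:IVG:cond} holds but $(-\underline{b}, 1-\overline{a}) = (0.4, 0.3)$ is empty.) The theorem does not assume Condition~S --- the Meijer G-function defining $f$ is then understood via the extended definition (deformed contours, or the expansion~\eqref{eq:G:F}) --- so in this regime there is no vertical line on which Proposition~\ref{prop_riesz_mellin} can be applied, and your argument does not get off the ground.

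The paper closes this gap with a second analytic-continuation argument, this time in the parameters $\mathbf{a}$, $\mathbf{b}$ rather than in $\nu$: one first proves the identity under Condition~S, then observes that both sides of~\eqref{eq:DVG:I} are analytic in the parameters on the full admissible region, and that the estimates~\eqref{eq:G:asymp} hold with constants locally uniform in the parameters, so that Proposition~\ref{prop:morera}\ref{prop:morera:1} applies (the supremum of $|f(y)|$ over a compact parameter set is $O(|y|^{-d+\varepsilon})$ at the origin and $O(|y|^{-\alpha-\varepsilon})$ at infinity, hence integrable against $|x-y|^{\alpha-d}$). You already invoke this mechanism for the $\nu \le 1$ case under Condition~B; you need to invoke it once more, under each of Conditions~A through~D, to dispose of Condition~S. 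With that addition your proof matches the paper's.
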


\begin{theorem}\label{thm:DVG}
Let $V(x)$ be a solid harmonic polynomial of degree $l\ge 0$ and $\delta = d + 2 l$. Assume that $\alpha>0$ and parameters $m$, $n$, $p$, $q$, $\mathbf{a}$ and $\mathbf{b}$ satisfy Condition~A, and in addition
\begin{equation}\label{eq:DVG:cond}
\begin{aligned}
2 (1 - \overline{a}) & > -\alpha + l , & -2 \underline{b} & < d + l .
\end{aligned}
\end{equation}
Define
$
f(x):=V(x) G^{mn}_{pq}\Big( 
\,\begin{matrix}
\mathbf{a} \\ \mathbf{b}
\end{matrix}\, \Big  \vert \, |x|^2 
\Big).
$
Then
\begin{equation}\label{eq:DVG:D}
(-\Delta)^{\alpha/2} f(x)=2^{\alpha} V(x) G^{m+1,n+1}_{p+2,q+2}\Big( 
\,\begin{matrix}
1-\tfrac{\delta + \alpha}{2}, & \mathbf{a}-\tfrac{\alpha}{2}, & -\tfrac{\alpha}{2} \\ 0, & \mathbf{b}-\tfrac{\alpha}{2}, &1-\tfrac{\delta}{2}
\end{matrix}\, \Big  \vert \, |x|^2 
\Big )
\end{equation}
for all $x \ne 0$. \mk{The same statement holds under Conditions~B, C and~D, provided that
\begin{equation}\label{eq:DVG:cond2}
\begin{aligned}
2 \overline{\lambda} & > -\alpha + l , &\qquad 2 \underline{\lambda} & < d + l ;
\end{aligned}
\end{equation}
if Condition~B is satisfied, we additionally require that $\nu > 0$ and either $|x| \ne 1$ or $\nu > 1 + \alpha$. Finally, the result extends to $x = 0$ whenever both $f$ and $(-\Delta)^{\alpha/2} f$ are continuous at $0$.}
\end{theorem}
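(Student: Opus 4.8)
The plan is to derive~\eqref{eq:DVG:D} from the Riesz potential computation in Proposition~\ref{prop_riesz_mellin} combined with the inversion principle of Proposition~\ref{prop_Delta_inverse_I_alpha}, and then to remove the restrictions on $\alpha$ and on the parameters by analytic continuation. Denote by $g(x)$ the right-hand side of~\eqref{eq:DVG:D}, so that $g(x) = 2^\alpha V(x)\tilde\phi(|x|^2)$ for a Meijer G-function $\tilde\phi$, and keep $f(x) = V(x)\phi(|x|^2)$ with $\phi = G^{mn}_{pq}(\mathbf a;\mathbf b\,|\,\cdot\,)$. The crux is to prove the single identity $f = (-\Delta)^{-\alpha/2}g$; once this is known, Proposition~\ref{prop_Delta_inverse_I_alpha} will give $g = (-\Delta)^{\alpha/2}f$.

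To establish $f=(-\Delta)^{-\alpha/2}g$ I would work with Mellin transforms. Reading off~\eqref{eq:MG}, the radial factor of $g$ satisfies
\[
\mellin\tilde\phi(s) = 2^\alpha\,\frac{\Gamma(s)\,\Gamma(\tfrac{\delta+\alpha}{2}-s)}{\Gamma(s-\tfrac{\alpha}{2})\,\Gamma(\tfrac{\delta}{2}-s)}\,\mellin\phi\!\left(s-\tfrac{\alpha}{2}\right),
\]
where the numerator factors $\Gamma(s)$ and $\Gamma(\tfrac{\delta+\alpha}{2}-s)$ arise from the new bottom parameter $0$ and top parameter $1-\tfrac{\delta+\alpha}{2}$, and the denominator factors $\Gamma(s-\tfrac{\alpha}{2})$ and $\Gamma(\tfrac{\delta}{2}-s)$ from the new top parameter $-\tfrac{\alpha}{2}$ and bottom parameter $1-\tfrac{\delta}{2}$. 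Applying Proposition~\ref{prop_riesz_mellin} to $g$ with the same $V$, $\delta$ and $\alpha$, inserting this expression into~\eqref{eqn_riesz_mellin} and shifting $s\mapsto s+\tfrac{\alpha}{2}$, the four extra gamma factors and the constant $2^\alpha$ cancel against the Riesz multiplier in pairs, leaving precisely $\mellin\phi(s)$. Thus the radial part of $(-\Delta)^{-\alpha/2}g$ equals $\phi$, i.e. $(-\Delta)^{-\alpha/2}g=f$. For this to be rigorous I must produce a contour along which every inverse Mellin transform converges and~\eqref{eqn_riesz_mellin_condition} holds for $g$; a short computation shows that such a contour exists as soon as $2(1-\overline a)>l$ and $-2\underline b<d+l-\alpha$, which is slightly stronger than~\eqref{eq:DVG:cond}.

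Assuming moreover $0<\alpha<d$, I would then invoke Proposition~\ref{prop_Delta_inverse_I_alpha}. Under Condition~A the function $g$ is analytic in a sector, hence continuous at every $x\neq0$, and $(1+|y|)^{\alpha-d}g(y)$ is integrable by the asymptotics~\eqref{eq:G:asymp}; the proposition then yields $g=(-\Delta)^{\alpha/2}f$ pointwise for $x\neq0$. This proves~\eqref{eq:DVG:D} under Condition~A, for $0<\alpha<d$, and under the strengthened inequalities.

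It remains to reach the full generality by continuation. To weaken the inequalities to~\eqref{eq:DVG:cond} I would use that both sides of~\eqref{eq:DVG:D} are analytic in the parameters $(\mathbf a,\mathbf b)$ wherever defined: the identity, already valid on $\{2(1-\overline a)>l,\ -2\underline b<d+l-\alpha\}$, propagates to the connected region carved out by~\eqref{eq:DVG:cond} and Condition~S. To pass from $0<\alpha<d$ to all $\alpha>0$ I would argue by analyticity in $\alpha$: for fixed $x\neq0$ and fixed even $k>\re\alpha$ the left-hand side of~\eqref{def_fractional_Laplacian} is analytic in $\alpha$ on the strip where the integral converges, the right-hand side is a Meijer G-function whose parameters are affine in $\alpha$ and hence analytic in $\alpha$ wherever~\eqref{eq:DVG:cond} keeps it defined, so agreement on $(0,d)$ forces agreement on the common strip. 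The transition from Condition~A to Conditions~B, C and~D---with~\eqref{eq:DVG:cond2} replacing~\eqref{eq:DVG:cond}, the role of $\underline\lambda,\overline\lambda$ from~\eqref{eq:indices}, and the caveats on $\nu$ and at $|x|=1$---follows from the contour-deformation description of the Meijer G-function recalled after~\eqref{reduction2}, and the case $x=0$ from continuity. The main obstacle throughout is the bookkeeping of convergence and pole separation: one must verify that no pole of the gamma factors in~\eqref{eqn_riesz_mellin} or~\eqref{eq:MG} is crossed, that Condition~S and the separation $b_i-a_j\notin\mathbb{Z}_{>0}$ persist, and---most delicately---that the singular integral~\eqref{def_fractional_Laplacian} keeps converging exactly up to the boundary~\eqref{eq:DVG:cond2}; showing that the indices $\underline\lambda,\overline\lambda$ control the behaviour of $g$ near $0$ and $\infty$ sharply enough to justify both the integrability hypothesis of Proposition~\ref{prop_Delta_inverse_I_alpha} and this convergence is the technically demanding part.
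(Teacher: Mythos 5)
Your proposal follows essentially the same route as the paper: the paper likewise defines $g$ as the right-hand side of~\eqref{eq:DVG:D}, verifies $(-\Delta)^{-\alpha/2}g=f$ under exactly your strengthened conditions $2(1-\overline{a})>l$ and $-2\underline{b}<d+l-\alpha$ (it does so by applying Theorem~\ref{thm:IVG} to $g$ and cancelling the extra gamma factors via the reduction formulas~\eqref{reduction1}--\eqref{reduction2}, which is the same Mellin-multiplier cancellation you carry out directly from Proposition~\ref{prop_riesz_mellin}), inverts via Proposition~\ref{prop_Delta_inverse_I_alpha}, and then relaxes the restrictions on $\alpha$, $\mathbf{a}$ and $\mathbf{b}$ by the analytic-continuation machinery of Proposition~\ref{prop:morera}. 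The remaining cases (Conditions~B, C, D) are only sketched in the paper as well, with the additional estimate~\eqref{Meijer_G_asymptotics_1} near $r=1$ playing the role you assign to the contour-deformation bookkeeping.
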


By considering special cases of Meijer G-functions and using Theorems~\ref{thm:IVG} and~\ref{thm:DVG} one \mk{can} obtain a multitude of new explicit examples and give a short proof of many known results. The most useful resource for deriving this results is the large collection of formulas, expressing known elementary and special functions in terms of Meijer G-function, which can be found in Table~8.4 in~\cite{Prudnikov_V3}. In the next two sections we provide several examples with different level of specialisation. Whenever possible, we state our results in terms of the regularised hypergeometric function ${_p\tilde{F}_q}$, a somewhat simpler object than the Meijer G-function. \mk{Note that} if none of the $a_j$ is a non-positive integer, we have, by~\eqref{eq:G:F},
\begin{equation}
\label{eq:FG}
{_p\tilde F_q}\Big(\,\begin{matrix}
\mathbf{a} \\
\mathbf{b}
\end{matrix}\, \Big \vert \, {-r} \Big) =
\frac{1}{\prod\limits_{j = 1}^p \Gamma(a_j)} \, G^{1,p}_{p,q+1}\Big( 
\,\begin{matrix}
\multicolumn{2}{c}{1-\mathbf{a}} \\ 0,& 1-\mathbf{b}
\end{matrix}\, \Big  \vert \, r
\Big ).
\end{equation}

\subsection{Full space}

By formula~8.4.2.5 in~\cite{Prudnikov_V3} and the property~\eqref{eq:G:shift}, we have
\begin{equation}
\label{eq:power:G}
r^\rho (1+r)^\sigma = \frac{1}{\Gamma(-\sigma)} \,
G^{11}_{11}\Big( 
\,\begin{matrix}
1 + \rho + \sigma \\ \rho
\end{matrix}\, \Big  \vert \, r
\Big).
\end{equation}
This implies the following extension of expressions given in~\cite{Samko_book}. We remark that when $\rho = 0$ or $2 \rho + 2 \sigma = \alpha - \delta$, then the expression for $(-\Delta)^{\alpha/2} f(x)$ can be written in terms of the Gauss's hypergeometric function ${_2F_1}$.

\begin{corollary}
\label{cor:power}
Let $V(x)$ be a solid harmonic polynomial of degree $l\ge 0$ and $\delta = d + 2 l$. Assume that $-d < \alpha < 0$ or $\alpha > 0$, $2 \rho > -d - l$ and $2 \rho + 2 \sigma < \alpha - l$. Define
\[
f(x) := V(x) |x|^{2 \rho} (1+|x|^2)^\sigma .
\]
Then
\[
(-\Delta)^{\alpha/2} f(x) = \frac{2^{\alpha}}{\Gamma(-\sigma)} \, V(x) G^{2,2}_{3,3}\Big( 
\,\begin{matrix}
1-\tfrac{\delta + \alpha}{2}, &1 + \rho + \sigma - \tfrac{\alpha}{2}, & -\tfrac{\alpha}{2} \\ 0, &\rho - \tfrac{\alpha}{2}, &1-\tfrac{\delta}{2}
\end{matrix}\, \Big  \vert \, |x|^2 
\Big )
\]
for all $x \ne 0$, and also for $x = 0$ if $2 \rho > \alpha - l$ or $2 \rho$ is an even integer not less than $-l$.
\end{corollary}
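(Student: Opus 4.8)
The plan is to read the corollary off directly from formula~\eqref{eq:power:G} together with Theorems~\ref{thm:IVG} and~\ref{thm:DVG}, treating it as a pure specialisation rather than an independent argument. Writing $r=|x|^2$, formula~\eqref{eq:power:G} expresses $|x|^{2\rho}(1+|x|^2)^\sigma$ as $\tfrac{1}{\Gamma(-\sigma)}$ times the Meijer G-function $G^{11}_{11}$ with upper parameter $1+\rho+\sigma$ and lower parameter $\rho$, so that $f(x)=\tfrac{1}{\Gamma(-\sigma)}V(x)G^{11}_{11}(1+\rho+\sigma;\rho\mid|x|^2)$ is exactly of the form to which the main theorems apply, with $m=n=p=q=1$. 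Since $p+q=2<4=2m+2n$, the input satisfies Condition~A, so the only hypotheses I must verify are the inequalities \eqref{eq:DVG:cond}/\eqref{eq:IVG:cond}. Here $\overline a=1+\rho+\sigma$ and $\underline b=\rho$, hence $2(1-\overline a)=-2\rho-2\sigma$ and $-2\underline b=-2\rho$; the assumptions $2\rho+2\sigma<\alpha-l$ and $2\rho>-d-l$ therefore translate verbatim into \eqref{eq:DVG:cond} when $\alpha>0$, and (after replacing $\alpha$ by $-\alpha$, since $(-\Delta)^{\alpha/2}=(-\Delta)^{-(-\alpha)/2}$ is the Riesz potential of order $-\alpha\in(0,d)$) into \eqref{eq:IVG:cond} when $-d<\alpha<0$.

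Applying Theorem~\ref{thm:DVG} (resp.\ Theorem~\ref{thm:IVG}) then shifts the parameters by $\mp\tfrac{\alpha}{2}$, adjoins the pairs $(1-\tfrac{\delta\pm\alpha}{2},0)$ and $(\mp\tfrac{\alpha}{2},1-\tfrac{\delta}{2})$, and yields the factor $2^{\pm\alpha}$; a short bookkeeping check shows the output is precisely $\tfrac{2^\alpha}{\Gamma(-\sigma)}V(x)G^{2,2}_{3,3}(\cdots)$ with the stated parameters in both cases, the signs in the $\alpha<0$ computation matching because $\tfrac{-\alpha}{2}=-\tfrac{(-\alpha)}{2}$. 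This settles every $x\neq0$. One minor point deserves mention: \eqref{eq:power:G} invokes the Meijer G-function in a regime where Condition~S may fail (namely $\sigma\ge0$), so here I would either use the contour-deformation extension noted after the definition, or argue by analytic continuation in $\sigma$, both sides being analytic in $\sigma$ off the non-negative integers where $\Gamma(-\sigma)$ has poles.

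The substantive work, and what I expect to be the main obstacle, is the extension to $x=0$, which I would treat by establishing continuity of both sides separately. For the right-hand side, the output $G^{2,2}_{3,3}$ again satisfies Condition~A, so by \eqref{eq:indices}--\eqref{eq:G:asymp} its $r\to0^+$ behaviour is governed by $\underline b'=\min(0,\rho-\tfrac{\alpha}{2})$, giving $V(x)G(\cdots\mid|x|^2)=O(|x|^{\,l+2\underline b'-\varepsilon})$. The hypothesis $2\rho>\alpha-l$ is exactly $\rho-\tfrac{\alpha}{2}>-\tfrac{l}{2}$, whence $l+2\underline b'>0$ (and for $l=0$ the finite limit is read off the residue at $s=0$ in the expansion~\eqref{eq:G:F}), so the right-hand side is continuous at $0$. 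For the left-hand side I would split on the sign of $\alpha$: when $\alpha>0$ the same inequality gives $l+2\rho>\alpha>0$, so $f\sim|x|^{l+2\rho}$ is continuous at $0$ and the continuity clause of Theorem~\ref{thm:DVG} applies; when $-d<\alpha<0$ the integrand $f(-y)|y|^{-d+\alpha}$ of the Riesz potential is $O(|y|^{\,l+2\rho-d+\alpha})$ near the origin, integrable precisely when $2\rho>\alpha-l$, so the potential is finite and continuous at $0$. Since both sides are continuous at $0$ and agree for $x\neq0$, they agree at $0$ as well.

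Finally, the alternative condition that $2\rho$ be an even integer not less than $-l$ (i.e.\ $\rho$ a non-negative integer) must be handled by cancellation rather than by decay. In that case $f$ is a polynomial times the smooth factor $(1+|x|^2)^\sigma$, hence smooth at $0$, so the left-hand side is continuous; on the right-hand side the only potentially singular small-$r$ term, $r^{\rho-\alpha/2}$ arising from the pole of $\Gamma(\rho-\tfrac{\alpha}{2}+s)$ at $s=\tfrac{\alpha}{2}-\rho$, drops out because its coefficient carries the factor $1/\Gamma(-\rho)=0$, coming from the denominator gamma $\Gamma(s-\tfrac{\alpha}{2})$ evaluated at that pole. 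This restores continuity of the right-hand side, and the continuity argument of the previous paragraph again forces equality at $0$. Thus the genuinely delicate points are only the parameter inequalities and this residue cancellation, while the rest is the routine substitution into the two main theorems.
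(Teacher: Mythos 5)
Your proposal is correct and follows exactly the paper's route: the paper's entire proof of this corollary is the one-line observation that the result follows from Theorem~\ref{thm:IVG} (when $-d<\alpha<0$) or Theorem~\ref{thm:DVG} (when $\alpha>0$) by using~\eqref{eq:power:G}, which is precisely your first two paragraphs. Your additional analysis of Condition~S and of the $x=0$ clause supplies detail the paper leaves implicit and is essentially sound, modulo two small slips: the Riesz kernel of order $-\alpha$ is $|y|^{-d-\alpha}$, not $|y|^{-d+\alpha}$ (though the integrability condition $2\rho>\alpha-l$ you state is the correct one for the correct kernel), and the hypothesis ``$2\rho$ an even integer not less than $-l$'' also admits negative integers $\rho$ when $l\ge 2$, a case not covered by your cancellation argument via $1/\Gamma(-\rho)=0$.
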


Due to~\eqref{eq:FG} (and the reduction formulas~\eqref{reduction1} and~\eqref{reduction2}), the operator $(-\Delta)^{\alpha/2}$ acts nicely on generalised hypergeometric functions. In the following statement the last parameter $b_q = \tfrac{\delta}{2}$ has a special role; to avoid ambiguities, we denote $\mathbf{b}' = (b_1, \dots, b_{q-1})$.

\begin{corollary}\label{cor:DVF}
Let $V(x)$ be a solid harmonic polynomial of degree $l\ge 0$ and $\delta = d + 2 l$. Assume that $\alpha > -d$ and parameters $p$, $q$, $\mathbf{a} \in \c^p$ and $\mathbf{b}' \in \c^{q-1}$ satisfy
\[
\begin{aligned}
p \in \{ q - 1 , q, q + 1 \} ,
&\qquad& 2 \underline{a} > -\alpha + l ,
\end{aligned}
\]
where $\underline{a} = \min\limits_{1 \le j \le p} \re(a_j)$. Define 
$
f(x):=V(x) {_p\tilde{F}_q}\Big(\,\begin{matrix}
\multicolumn{2}{c}{\mathbf{a}} \\
\mathbf{b}', & \tfrac{\delta}{2}
\end{matrix}\, \Big \vert \, {-|x|^2} \Big)
$.
Then
\[
(-\Delta)^{\alpha/2} f(x)=
\frac{2^{\alpha} \prod\limits_{j = 1}^p \Gamma(a_j + \tfrac{\alpha}{2})}{\prod\limits_{j = 1}^p \Gamma(a_j)} \, V(x) {_p\tilde{F}_q}\Big(\,\begin{matrix}
\multicolumn{2}{c}{\mathbf{a} + \tfrac{\alpha}{2}} \\
\mathbf{b}' + \tfrac{\alpha}{2}, & \tfrac{\delta}{2}
\end{matrix}\, \Big \vert \, {-|x|^2} \Big)
\]
for all $x\in \r^d$.
\end{corollary}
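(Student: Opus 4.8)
The plan is to pass to the Meijer G-function via~\eqref{eq:FG}, apply Theorem~\ref{thm:DVG} (for $\alpha > 0$) or Theorem~\ref{thm:IVG} (for $-d < \alpha < 0$; the case $\alpha = 0$ is trivial), simplify the resulting G-function by the reduction formulas, and finally return to a hypergeometric function by reading~\eqref{eq:FG} backwards. I would treat the two signs of $\alpha$ together: for $\alpha < 0$ one applies Theorem~\ref{thm:IVG} with $\beta = -\alpha$, and since its output~\eqref{eq:DVG:I} coincides in form with~\eqref{eq:DVG:D} after substituting $\beta = -\alpha$, the algebra is literally the same. First I would use~\eqref{eq:FG} with the lower vector $(\mathbf{b}', \tfrac{\delta}{2})$ to write
\[
f(x) = \Big(\prod_{j=1}^p \Gamma(a_j)\Big)^{-1} V(x)\, G^{1,p}_{p,q+1}\Big(
\,\begin{matrix}
\multicolumn{2}{c}{1-\mathbf{a}} \\ 0,\ 1-\mathbf{b}',& 1-\tfrac{\delta}{2}
\end{matrix}\, \Big\vert \, |x|^2
\Big),
\]
so that $f$ is of the form required by Theorems~\ref{thm:IVG} and~\ref{thm:DVG}, with Meijer parameters $m = 1$, $n = p$, upper vector $1 - \mathbf{a}$ and lower vector $(0, 1 - \mathbf{b}', 1 - \tfrac{\delta}{2})$.

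Next I would check the hypotheses of the relevant theorem. Comparing the numbers of gamma factors gives $p_{\mathrm M} + q_{\mathrm M} = p + q + 1$ against $2m + 2n = 2 + 2p$, so Condition~A holds exactly when $p \ge q$, i.e.\ for $p \in \{q, q+1\}$, whereas $p = q-1$ produces the equality $p_{\mathrm M} + q_{\mathrm M} = 2m + 2n$ with $p_{\mathrm M} < q_{\mathrm M}$, that is, Condition~C; the excluded values $p \le q - 2$ are precisely those violating the standing requirement $p + q \le 2m + 2n$, which is why the corollary restricts to the band $p \in \{q-1, q, q+1\}$. The first lower parameter equals $0$, so $\underline{b} = 0$ and hence $-2\underline{b} = 0 < d + l$ automatically (and $\underline{\lambda} = 0$ under Condition~C). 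Since $n = p$ we have $\overline{a} = \max_j \re(1 - a_j) = 1 - \underline{a}$, so the first bound in~\eqref{eq:IVG:cond} (used with $\beta = -\alpha$) and in~\eqref{eq:DVG:cond} both read exactly $2\underline{a} > -\alpha + l$, the assumed hypothesis. This settles the cases $p \in \{q, q+1\}$ completely.

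With the hypotheses verified, the theorem produces $2^\alpha (\prod_j \Gamma(a_j))^{-1} V(x)$ times a $G^{2,p+1}_{p+2,q+3}$ whose upper vector is $(1 - \tfrac{\delta + \alpha}{2},\ 1 - \mathbf{a} - \tfrac{\alpha}{2},\ -\tfrac{\alpha}{2})$ and lower vector is $(0,\ -\tfrac{\alpha}{2},\ 1 - \mathbf{b}' - \tfrac{\alpha}{2},\ 1 - \tfrac{\delta}{2} - \tfrac{\alpha}{2},\ 1 - \tfrac{\delta}{2})$. Two values now repeat across the numerator and denominator groups, namely $-\tfrac{\alpha}{2}$ and $1 - \tfrac{\delta+\alpha}{2} = 1 - \tfrac{\delta}{2} - \tfrac{\alpha}{2}$. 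Rearranging parameters within each group (permitted by the defining Mellin product~\eqref{eq:MG}) and applying~\eqref{reduction2} and~\eqref{reduction1} once each cancels both pairs and leaves $G^{1,p}_{p,q+1}(1 - \mathbf{a} - \tfrac{\alpha}{2};\, 0, 1 - \mathbf{b}' - \tfrac{\alpha}{2}, 1 - \tfrac{\delta}{2} \mid |x|^2)$. Reading~\eqref{eq:FG} in reverse, now with parameters $\mathbf{a} + \tfrac{\alpha}{2}$ and $(\mathbf{b}' + \tfrac{\alpha}{2}, \tfrac{\delta}{2})$, identifies this G-function as $\prod_j \Gamma(a_j + \tfrac{\alpha}{2})$ times the claimed ${_p\tilde F_q}$, and collecting the gamma constants yields the stated prefactor. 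The extension to $x = 0$ follows since both $f$ and its image are continuous there.

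The genuinely delicate point, and the one I expect to be the main obstacle, is the boundary case $p = q - 1$. Here Condition~C is in force, and invoking Theorem~\ref{thm:DVG} directly would require the extra bound $2\overline{\lambda} > -\alpha + l$ from~\eqref{eq:DVG:cond2} together with $\nu > 0$, needed for $f$'s G-function to be defined at all; with $\overline{\lambda} = \min(1 - \overline{a}, \tfrac12 + \tfrac{\nu-1}{2})$ from~\eqref{eq:indices}, these involve $\nu$ and are \emph{not} implied by $2\underline{a} > -\alpha + l$ alone (already for $d = 1$, $V \equiv 1$, $p = 0$ the identity reduces to $(-\Delta)^{\alpha/2}\cos(2x) = 2^\alpha \cos(2x)$, valid for every $\alpha$, although the corresponding $\nu = -\tfrac12$ is negative). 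To reach the full range asserted in the corollary I would therefore argue by analyticity: both sides depend analytically on $(\mathbf{a}, \mathbf{b}')$ and on $\alpha$, so I would establish the identity first on the open set where Theorem~\ref{thm:DVG} applies and then continue it, for instance through a confluent limit degenerating the Condition~A case $p = q$ into the Condition~C case $p = q - 1$ (letting one upper parameter tend to infinity with the variable rescaled accordingly), while tracking the gamma prefactors in the limit.
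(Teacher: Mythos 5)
Your proposal follows the paper's own proof exactly: rewrite $f$ as a $G^{1,p}_{p,q+1}$ via~\eqref{eq:FG}, apply Theorem~\ref{thm:IVG} (for $-d<\alpha<0$) or Theorem~\ref{thm:DVG} (for $\alpha>0$), cancel the two repeated parameters $-\tfrac{\alpha}{2}$ and $1-\tfrac{\delta+\alpha}{2}$ with the reduction formulas~\eqref{reduction1}--\eqref{reduction2}, and read~\eqref{eq:FG} backwards; your Condition~A/C bookkeeping and the resulting gamma prefactor are all correct. The only divergence is at the margins: the paper's one-line proof uses analytic continuation (Proposition~\ref{prop:morera}) solely to remove the restriction that no $a_j$ is a non-positive integer, and it is silent on the $p=q-1$ case, where (as you rightly observe, e.g.\ for $\cos$) Condition~C brings in $\nu$-dependent hypotheses not implied by $2\underline{a}>-\alpha+l$ -- a genuine gap in the paper's own argument that your proposed continuation in the parameters (or confluent limit) is a reasonable, if not fully detailed, way to close.
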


The requirement that $b_q = \tfrac{\delta}{2}$ is of purely notational nature. In order to use Corollary~\ref{cor:DVF} in the general case, simply write
\[
{_p\tilde{F}_q}\Big(\,\begin{matrix}
\mathbf{a} \\
\mathbf{b}
\end{matrix}\, \Big \vert \, r \Big) = \Gamma(\tfrac{\delta}{2}) \, {_{p+1}\tilde{F}_{q+1}}\Big(\,\begin{matrix}
\mathbf{a}, & \tfrac{\delta}{2} \\[0.15em]
\mathbf{b}, & \tfrac{\delta}{2}
\end{matrix}\, \Big \vert \, r \Big).
\]
\mk{We remark that ${_0\tilde{F}_1}(a \, | \, {-|x|^2}) = |x|^{2 - 2a} J_{a-1}(2 |x|)$, where $J_{a-1}$ is the Bessel function. In particular, ${_0\tilde{F}_1}(\tfrac{1}{2} \, | \, {-|x|^2}) = \pi^{-1/2} \cos(2 |x|)$, which gives a very surprising expression: if $f(x) := \cos(|x|)$, then
\[
 (-\Delta)^{\alpha/2} f(x) = \frac{\sqrt{\pi} \, \Gamma(\tfrac{d + \alpha}{2})}{\Gamma(\tfrac{1 + \alpha}{2}) \Gamma(\tfrac{d}{2})} \, {_1\tilde{F}_2}
\Big(\,\begin{matrix}
\multicolumn{2}{c}{\tfrac{d + \alpha}{2}} \\
\tfrac{1 + \alpha}{2} , & \tfrac{d}{2}
\end{matrix}\, \Big \vert \, {-\tfrac{1}{2} |x|^2} \Big)
\]
for all $\alpha > 0$ and all $x \in \r^d$.}

\subsection{Unit ball and its complement}

The function $f$ in Theorems~\ref{thm:IVG} and~\ref{thm:DVG} is supported in the unit ball when $p = q = m$ and $n = 0$, and in the complement of the unit ball when $p = q = n$ and $m = 0$, see~\eqref{eq:G:F}. Therefore, we obtain explicit expressions for $(-\Delta)^{\alpha/2} f_1(x)$ and $(-\Delta)^{\alpha/2} f_2(x)$ (with $|x| \ne 1$), where
\begin{align*}
f_1(x) & := V(x) G^{p0}_{pp}\Big( 
\,\begin{matrix}
\mathbf{a} \\ \mathbf{b}
\end{matrix}\, \Big  \vert \, |x|^2 
\Big) , &
f_2(x):=V(x) G^{0p}_{pp}\Big( 
\,\begin{matrix}
\mathbf{a} \\ \mathbf{b}
\end{matrix}\, \Big  \vert \, |x|^2 
\Big)
\end{align*}
are supported in the unit ball and its complement, respectively. Here we assume that Condition~B holds, $\nu > 0$ and $-2 \underline{b} < d + l$ in the expression for $(-\Delta)^{\alpha/2} f_1(x)$ with $x \ne 0$ (a more restrictive condition is needed when $x = 0$), and that Condition~B holds, $\nu > 0$ and $2 (1 - \overline{a}) > -\alpha + l$ in the expression for $(-\Delta)^{\alpha/2} f_2(x)$.

For example, formulas 8.4.2.3--4 in \cite{Prudnikov_V3} combined with the property~\eqref{eq:G:shift} tell us that
\begin{align}
\label{eq:powerball:G}
r^\rho (1-r)_+^\sigma = \Gamma(1 + \sigma)
G^{10}_{11}\Big( 
\,\begin{matrix}
1 + \rho + \sigma \\ \rho
\end{matrix}\, \Big  \vert \, r
\Big), \\
\label{eq:powerballc:G}
r^\rho (r-1)_+^\sigma = \Gamma(1 + \sigma)
G^{01}_{11}\Big( 
\,\begin{matrix}
1 + \rho + \sigma \\ \rho
\end{matrix}\, \Big  \vert \, r
\Big).
\end{align}
Theorems~\ref{thm:IVG} and~\ref{thm:DVG} readily imply the following explicit expressions, which extend those previously derived in~\cite{bib:bik11,bib:d12} (see also~\cite{bib:huang2014}). 

\begin{corollary}
\label{cor:powerball}
Let $V(x)$ be a solid harmonic polynomial of degree $l\ge 0$ and $\delta = d + 2 l$. Assume that $-d < \alpha < 0$ or $\alpha > 0$. Define
\[
\begin{aligned}
f_1(x) & := V(x) |x|^{2 \rho} (1-|x|^2)_+^\sigma , \\
f_2(x) & := V(x) |x|^{2 \rho} (|x|^2-1)_+^\sigma .
\end{aligned}
\]
\begin{enumerate}[label={\rm{(\roman*)}}]
\item
If $2 \rho > -d - l$ and $\sigma > -1$, then
\[
(-\Delta)^{\alpha/2} f_1(x) = 2^{\alpha} \Gamma(1 + \sigma) V(x) G^{2,1}_{3,3}\Big( 
\,\begin{matrix}
1-\tfrac{\delta + \alpha}{2}, &1 + \rho + \sigma - \tfrac{\alpha}{2}, & -\tfrac{\alpha}{2} \\ 0, &\rho-\tfrac{\alpha}{2}, &1-\tfrac{\delta}{2}
\end{matrix}\, \Big  \vert \, |x|^2 
\Big )
\]
for all $x$ such that $x \ne 0$ and $|x| \ne 1$, and also for $x = 0$ if $2 \rho > \alpha - l$ or $2 \rho$ is an even integer not less than $-l$.
\item
If $2 \rho + 2 \sigma < \alpha - l$ and $\sigma > -1$, then
\[
(-\Delta)^{\alpha/2} f_2(x) = 2^{\alpha} \Gamma(1 + \sigma) V(x) G^{1,2}_{3,3}\Big( 
\,\begin{matrix}
1-\tfrac{\delta + \alpha}{2}, &1 + \rho + \sigma -\tfrac{\alpha}{2}, & -\tfrac{\alpha}{2} \\ 0, &\rho-\tfrac{\alpha}{2}, &1-\tfrac{\delta}{2}
\end{matrix}\, \Big  \vert \, |x|^2 
\Big )
\]
for all $x$ such that $|x| \ne 1$.
\end{enumerate}
\end{corollary}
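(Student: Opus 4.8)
The plan is to reduce both parts to a direct application of Theorems~\ref{thm:IVG} and~\ref{thm:DVG}, using the identities~\eqref{eq:powerball:G} and~\eqref{eq:powerballc:G} to recognise the radial profiles as Meijer G-functions. For part~(i) I would use~\eqref{eq:powerball:G} to write $|x|^{2\rho}(1-|x|^2)_+^\sigma = \Gamma(1+\sigma)\, G^{10}_{11}(\ldots\,|\,|x|^2)$, so that $f_1$ fits the template of the theorems with $m=1$, $n=0$, $p=q=1$, $\mathbf{a}=(1+\rho+\sigma)$ and $\mathbf{b}=(\rho)$. For part~(ii) the same computation with~\eqref{eq:powerballc:G} gives instead $m=0$, $n=1$ and the same vectors $\mathbf{a}$, $\mathbf{b}$. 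In both cases $p+q=2=2m+2n$ with $p=q$, so Condition~B is the relevant case, and the constant factor $\Gamma(1+\sigma)$ is carried through by linearity.

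Next I would verify that the parameter restrictions in the corollary are precisely the hypotheses of the theorems under Condition~B. Here $\nu=(1+\rho+\sigma)-\rho=1+\sigma$, so the requirement $\nu>0$ is exactly $\sigma>-1$. For part~(i), $n=0$ forces $\overline{a}=-\infty$ and $\overline{\lambda}=+\infty$, while $\underline{b}=\rho$ and $\underline{\lambda}=-\rho$; the inequalities involving $\overline{\lambda}$ then hold vacuously, and $2\underline{\lambda}<d+l$ becomes exactly $2\rho>-d-l$. For part~(ii), $m=0$ forces $\underline{b}=+\infty$ and $\underline{\lambda}=-\infty$, while $\overline{a}=1+\rho+\sigma$ and $\overline{\lambda}=-\rho-\sigma$; now $2\underline{\lambda}<d+l$ is automatic, and the $\overline{\lambda}$ inequality becomes $2\rho+2\sigma<\alpha-l$ in both the Riesz and the fractional-Laplacian regimes. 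Condition~S and the pole non-coincidence requirement are vacuous because $n=0$ (respectively $m=0$).

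With these checks in place, applying Theorem~\ref{thm:DVG} when $\alpha>0$ yields the stated $G^{2,1}_{3,3}$ expression for $f_1$ and the $G^{1,2}_{3,3}$ expression for $f_2$ at once, simply by reading off $m+1$, $n+1$ and the shifted parameters $\mathbf{a}-\tfrac{\alpha}{2}$, $\mathbf{b}-\tfrac{\alpha}{2}$. For $-d<\alpha<0$ I would instead apply Theorem~\ref{thm:IVG} with $-\alpha>0$ as its order; replacing the order's $\alpha$ by $-\alpha$ in~\eqref{eq:DVG:I} turns it into~\eqref{eq:DVG:D}, so both regimes fuse into the single statement of the corollary. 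Under Condition~B the theorems grant the identity only for $|x|\ne 1$, since we are not assuming the stronger condition $\nu>1+\alpha$; this excluded sphere is precisely where $f_1$ and $f_2$ fail to be smooth, so no information is lost.

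The point I expect to be the main obstacle is the extension to $x=0$ in part~(i). For $\alpha>0$ I would invoke the final clause of Theorem~\ref{thm:DVG}, reducing matters to the continuity of $f_1$ and of $(-\Delta)^{\alpha/2}f_1$ at the origin; the dichotomy ``$2\rho>\alpha-l$ or $2\rho$ an even integer not less than $-l$'' is tailored to secure this, either through the asymptotics~\eqref{eq:G:asymp} of the right-hand G-function near $0$, or because in the second case $V(x)|x|^{2\rho}$ is genuinely polynomial. For $-d<\alpha<0$ the operator is the honest convolution integral~\eqref{def_I_alpha}, and the inequality $2\rho>\alpha-l$ is exactly what renders $f_1(y)\,|y|^{-(d-|\alpha|)}$ integrable near $y=0$ (integrability near $|y|=1$ being guaranteed by $\sigma>-1$), so $(-\Delta)^{\alpha/2}f_1(0)$ exists and matches the continuous extension of the right-hand side. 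In part~(ii) the origin causes no trouble, as $f_2$ vanishes identically in a neighbourhood of $0$.
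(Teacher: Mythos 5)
Your proposal is correct and follows exactly the paper's route: the paper's proof is the one-line observation that the corollary follows from Theorem~\ref{thm:IVG} (for $-d<\alpha<0$) or Theorem~\ref{thm:DVG} (for $\alpha>0$) via~\eqref{eq:powerball:G} and~\eqref{eq:powerballc:G}, and your parameter bookkeeping ($m,n,p,q$, Condition~B, $\nu=1+\sigma$, the $\underline{\lambda}$, $\overline{\lambda}$ inequalities) simply makes that ``immediately'' explicit. Your treatment of the $x=0$ case is likewise consistent with (and somewhat more careful than) what the paper records.
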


The above result resembles Corollary~\ref{cor:power}. There are, however, only partial analogues of Corollary~\ref{cor:DVF} for functions supported in the unit ball. We provide two results of this kind: Corollary~\ref{cor_unit_ball_Delta_zero} and Theorem~\ref{thm:DVP}.

\begin{corollary}\label{cor_unit_ball_Delta_zero}
Let $V(x)$ be a solid harmonic polynomial of degree $l\ge 0$ and $\delta = d + 2 l$. 
Assume that $-d < \alpha < 0$ or $\alpha > 0$, as well as $2 \rho > -d - l$ and $\sigma > -1$
. Define
\begin{equation}\label{prop2_f1}
f(x) := V(x) (1-|x|^2)_+^{\sigma} \,
{_2\tilde{F}_1}\Big(\,\begin{matrix} 1 + \sigma - \tfrac{\alpha}{2}, & \tfrac{\alpha}{2} - \rho \\ \multicolumn{2}{c}{1 + \sigma} \end{matrix}\, \Big \vert \, 1-|x|^2 \Big) .
\end{equation}
Then
\begin{equation}\label{prop2_f2}
(-\Delta)^{\alpha/2} f(x) = \frac{2^{\alpha} \Gamma(\rho + \tfrac{\delta}{2})}
{\Gamma(1 + \sigma - \tfrac{\alpha}{2})} \, V(x) |x|^{\alpha-2 \rho} {_2\tilde{F}_1}
\Big(\,\begin{matrix} \rho + \tfrac{\delta}{2}, & \tfrac{\alpha}{2} - \sigma \\[0.1em] \multicolumn{2}{c}{\rho + \tfrac{\delta - \alpha}{2}} \end{matrix}\, \Big \vert \, |x|^2 \Big)
\end{equation}
for all $x$ such that $0 < |x| < 1$, and also for $x = 0$ if $2 \rho > \alpha - l$.
\end{corollary}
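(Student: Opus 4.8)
The plan is to write $f(x)=V(x)\phi(|x|^2)$ with $\phi$ a single Meijer G-function, apply Theorem~\ref{thm:DVG} when $\alpha>0$ (and Theorem~\ref{thm:IVG}, with exponent $-\alpha$, when $-d<\alpha<0$), and then collapse the $G^{3,1}_{4,4}$ produced by the theorem back to a $G^{1,1}_{2,2}$ by cancelling the two pairs of repeated parameters that the theorem introduces. A final application of~\eqref{eq:G:F} turns that $G^{1,1}_{2,2}$ into a power of $|x|$ times a regularised ${_2\tilde F_1}$, which is the asserted form.

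The first task is to identify $\phi(r)=(1-r)_+^\sigma\,{_2\tilde F_1}(\,\cdot\,|\,1-r)$ as a Meijer G-function. I would compute its Mellin transform: after the substitution $u=1-r$ the integral becomes $\int_0^1 u^\sigma(1-u)^{s-1}\,{_2\tilde F_1}(\,\cdot\,|\,u)\,\d u$, which I evaluate term by term via the Beta integral. The rising factorial $(1+\sigma)^{(n)}$ cancels, leaving $\mellin\phi(s)=\frac{\Gamma(s)}{\Gamma(s+1+\sigma)}\,{_2F_1}(1+\sigma-\tfrac\alpha2,\tfrac\alpha2-\rho;s+1+\sigma;1)$, and Gauss's summation of ${_2F_1}$ at $1$ collapses this to the ratio of four gamma functions
\[
\mellin\phi(s)=\frac{\Gamma(s)\,\Gamma(s+\rho)}{\Gamma(s+\tfrac{\alpha}{2})\,\Gamma(s+1+\rho+\sigma-\tfrac{\alpha}{2})}.
\]
By~\eqref{eq:MG} this is exactly $\cG^{20}_{22}$, so $\phi(r)=G^{20}_{22}\big(\begin{smallmatrix}\tfrac{\alpha}{2},\,1+\rho+\sigma-\tfrac{\alpha}{2}\\ 0,\,\rho\end{smallmatrix}\,\big|\,r\big)$, a function of type $G^{p0}_{pp}$, hence supported in $(0,1)$, as it must be.

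Next I would verify the hypotheses. Condition~B holds ($p=q=2$, $2m+2n=4$), and $\nu=\sum a_j-\sum b_j=1+\sigma$, so $\nu>0$ is precisely $\sigma>-1$; since $n=0$ we have $\overline a=-\infty$, whence $\overline\lambda=+\infty$ makes the first inequality in~\eqref{eq:DVG:cond2} automatic, while $\underline\lambda=-\underline b=\max(0,-\rho)$ turns $2\underline\lambda<d+l$ into $2\rho>-d-l$. On $0<|x|<1$ we have $|x|\ne1$, so Theorem~\ref{thm:DVG} (for $\alpha>0$) and Theorem~\ref{thm:IVG} (for $-d<\alpha<0$) both apply and produce the same G-function,
\[
(-\Delta)^{\alpha/2}f(x)=2^{\alpha}V(x)\,G^{3,1}_{4,4}\Big(\,\begin{matrix}1-\tfrac{\delta+\alpha}{2},&0,&1+\rho+\sigma-\alpha,&-\tfrac{\alpha}{2}\\ 0,&-\tfrac{\alpha}{2},&\rho-\tfrac{\alpha}{2},&1-\tfrac{\delta}{2}\end{matrix}\,\Big|\,|x|^2\Big).
\]
The value $0$ appears among the first-$m$ lower entries and the last-$(p-n)$ upper entries, and likewise $-\tfrac\alpha2$; two applications of~\eqref{reduction2} remove both pairs and collapse this to $G^{1,1}_{2,2}\big(\begin{smallmatrix}1-\tfrac{\delta+\alpha}{2},\,1+\rho+\sigma-\alpha\\ \rho-\tfrac{\alpha}{2},\,1-\tfrac{\delta}{2}\end{smallmatrix}\,\big|\,|x|^2\big)$. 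Since $m=1$, the expansion~\eqref{eq:G:F} has a single term: the factor $r^{b_1}$ with $b_1=\rho-\tfrac\alpha2$ supplies the power of $|x|^2$ (and the exponent $l+2\rho-\alpha$ of $|x|$ matches the $x=0$ extension threshold below), the prefactor $\Gamma(1+b_1-a_1)/\Gamma(a_2-b_1)$ equals $\Gamma(\rho+\tfrac\delta2)/\Gamma(1+\sigma-\tfrac\alpha2)$, and the arguments $1+b_1-a_j$, $1+b_1-b_2$ simplify to $\rho+\tfrac\delta2$, $\tfrac\alpha2-\sigma$ and $\rho+\tfrac{\delta-\alpha}2$, giving the stated ${_2\tilde F_1}$.

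The main obstacle is the first step: recognising $\phi$ as a Meijer G-function, which rests entirely on the Gauss summation reducing the Mellin transform to four gamma factors, together with checking that the parameter restrictions of Theorems~\ref{thm:IVG} and~\ref{thm:DVG} come out to exactly $2\rho>-d-l$ and $\sigma>-1$. Once $\phi$ is identified, the two reductions and the single-term expansion are routine bookkeeping. The extension to $x=0$ I would treat separately: when $2\rho>\alpha-l$ the exponent $l+(2\rho-\alpha)$ of $|x|$ in the answer is positive, so $(-\Delta)^{\alpha/2}f$ (and trivially $f$) is continuous at the origin, and the formula extends there by the final clause of Theorem~\ref{thm:DVG}, with the analogous continuity check handling the Riesz case $-d<\alpha<0$.
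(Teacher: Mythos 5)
Your proof is correct and follows essentially the same route as the paper's: identify $f$ as $V(x)\,G^{20}_{22}\bigl(\begin{smallmatrix}\alpha/2, & 1+\rho+\sigma-\alpha/2\\ 0, & \rho\end{smallmatrix}\,\big|\,|x|^2\bigr)$ (the paper simply cites formula~8.4.49.22 of~\cite{Prudnikov_V3} where you rederive the identification via the Beta integral and Gauss's summation -- a legitimate self-contained substitute), then apply Theorem~\ref{thm:DVG} or~\ref{thm:IVG}, cancel the two repeated parameter pairs with~\eqref{reduction2}, and expand the resulting $G^{1,1}_{2,2}$ by~\eqref{eq:G:F}. One remark: your expansion correctly produces the power $r^{b_1}=|x|^{2\rho-\alpha}$, which is consistent with the continuity threshold $2\rho>\alpha-l$ at the origin, so the exponent $\alpha-2\rho$ printed in~\eqref{prop2_f2} is a sign typo in the statement rather than ``the asserted form'' your last step claims to match.
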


With $\sigma = \tfrac{\alpha}{2}$, we simply have
\[
(-\Delta)^{\alpha/2} f(x) = \frac{2^{\alpha} \Gamma(\rho + \tfrac{\delta}{2})}
{\Gamma(\rho + \tfrac{\delta - \alpha}{2})} \, V(x) |x|^{\alpha-2 \rho} .
\]
In particular, setting $\alpha > 0$, $\sigma=\tfrac{\alpha}{2}$ and $2 \rho = \alpha - \delta$ in the above proposition we obtain a function
\begin{equation}\label{corr2_f1}
f(x):=V(x) (1-|x|^2)_+^{\alpha/2} \,
{_2\tilde{F}_1}\Big(\,\begin{matrix} 1, & \tfrac{\delta}{2} \\ \multicolumn{2}{c}{1+\tfrac{\alpha}{2}} \end{matrix}\, \Big \vert \, 1-|x|^2 \Big),
\end{equation}
which is continuous except at $x = 0$, is equal to zero in the complement of the unit ball and satisfies $(-\Delta)^{\alpha/2} f(x) = 0$ for all $x$ in the unit ball except at $x = 0$.

\begin{remark}
The Green function $G(x, y)$ for $(-\Delta)^{\alpha/2}$ in the unit ball was found in~\cite{bib:r38a}. Traditionally, it is written in the form that was first stated in~\cite{bib:bgr61},
\[
G(x, y) = \frac{\Gamma(\tfrac{d}{2})}{2^\alpha \pi^{d/2} (\Gamma(\tfrac{\alpha}{2}))^2} \, \frac{1}{|x - y|^{d - \alpha}} \int_0^{\tfrac{(1 - |x|^2) (1 - |y|^2)}{|x - y|^2}} \frac{s^{\alpha/2 - 1}}{(1 + s)^{d/2}} \, \d s .
\]
Using formula~3.194.1 in~\cite{bib:gr07}, we obtain
\[
G(x, y) = \frac{\Gamma(\tfrac{d}{2}) (1 - |x|^2)^{\alpha/2} (1 - |y|^2)^{\alpha/2}}{2^\alpha \pi^{d/2} \Gamma(\tfrac{\alpha}{2}) |x - y|^d} \, {_2\tilde{F}_1}\Big(\,\begin{matrix}\tfrac{d}{2}, & \tfrac{\alpha}{2} \\ \multicolumn{2}{c}{1 + \tfrac{\alpha}{2}} \end{matrix}\, \Big \vert \, {-\frac{(1 - |x|^2) (1 - |y|^2)}{|x - y|^2}} \Big) .
\]
For $y = 0$, by formula~9.131.1 in~\cite{bib:gr07},
\[
\begin{aligned}
 G(x, 0) & = \frac{\Gamma(\tfrac{d}{2}) (1 - |x|^2)^{\alpha/2}}{2^\alpha \pi^{d/2} \Gamma(\tfrac{\alpha}{2}) |x|^d} \, {_2\tilde{F}_1}\Big(\,\begin{matrix}\tfrac{d}{2}, & \tfrac{\alpha}{2} \\ \multicolumn{2}{c}{1 + \tfrac{\alpha}{2}} \end{matrix}\, \Big \vert \, 1 - \frac{1}{|x|^2} \Big) \\
 & = \frac{\Gamma(\tfrac{d}{2}) (1 - |x|^2)^{\alpha/2}}{2^\alpha \pi^{d/2} \Gamma(\tfrac{\alpha}{2})} \, {_2\tilde{F}_1}\Big(\,\begin{matrix}\tfrac{d}{2}, & 1 \\ \multicolumn{2}{c}{1 + \tfrac{\alpha}{2}} \end{matrix}\, \Big \vert \, 1 - |x|^2 \Big) = \frac{\Gamma(\tfrac{d}{2})}{2^\alpha \pi^{d/2} \Gamma(\tfrac{\alpha}{2})} \, f(x) ,
\end{aligned}
\]
where $f$ is given by~\eqref{corr2_f1} with $l = 0$ and $V(x) \equiv 1$. Thus Corollary~\ref{cor_unit_ball_Delta_zero} can be viewed as an alternative derivation of the expression for $G(x, 0) = C f(x)$ (the value of the constant $C$ can be found by comparing the asymptotic expansion of $f$ and the Riesz potential near $0$, we omit the details). We remark that the expression for $G(x, y)$ can be obtained from that for $G(x, 0)$ by means of the Kelvin transformation, see~\cite{bib:l72,bib:r38a}.
\end{remark}


For our last main result we need some additional notation. Solid harmonic polynomials of degree $l \ge 0$ form a finite-dimensional space, having dimension 
\[
M_{d,l} := \frac{d+2l-2}{d+l-2} \, \binom{d+l-2}{l}.
\] 
For each $l$ we fix a linear basis of this space, denoted by $V_{l,m}$ with $m = 1, \dots, M_{d,l}$, which is orthonormal with respect to the surface measure $\mu$ on the unit sphere. Since the space $L^2(\mu)$ is a direct sum of the above linear spaces, the collection $V_{l,m}$, with $l = 0, 1, \dots$ and $m = 1, \dots, M_{d,l}$, is an orthonormal basis of $L^2(\mu)$. \mk{For further properties of harmonic polynomials, see~\cite{bib:abw01,bib:dx14}.}

Recall that the Jacobi polynomials are defined as
\begin{equation}\label{eq:jacobiq}
\begin{aligned}
 P_n^{(\alpha,\beta)}(z) & := \frac{\Gamma(\alpha+1+n)}{n!} \,
 {_2\tilde{F}_1}\Big(\,\begin{matrix}
-n, & 1+\alpha+\beta+n \\ \multicolumn{2}{c}{\alpha+1} \end{matrix}\, \Big \vert \, \frac{1-z}{2} \Big) \\
& = \frac{(-1)^n \Gamma(\beta+1+n)}{n!} \,
 {_2\tilde{F}_1}\Big(\,\begin{matrix}
-n, & 1+\alpha+\beta+n \\ \multicolumn{2}{c}{\beta+1} \end{matrix}\, \Big \vert \, \frac{1+z}{2} \Big). 
\end{aligned}
\end{equation}
Given $d \ge 1$ and $\alpha > 0$, we denote
\begin{equation}\label{eq:jacobip}
 P_{l,m,n}(x) := V_{l,m}(x) P^{(\alpha/2,d/2+l-1)}_n(2 |x|^2 - 1),
\end{equation}
where $l, n \ge 0$ and $1 \le m \le M_{d,l}$. This system of polynomials forms a complete orthogonal system in \mk{$L^2(w)$}, the weighted $L^2$ space with weight function $w(x) = (1 - |x|^2)_+^{\alpha/2}$, see Proposition~2.3.1 in~\cite{bib:dx14}. Finally, we define
\[
 p_{l,m,n}(x) := (1 - |x|^2)_+^{\alpha/2} P_{l,m,n}(x) .
\]
The following proposition will play an important role in the study of eigenvalues of the fractional Laplace operator in~\cite{DKK_2015}. For the special case of this result when $d=\alpha=2$, see~\cite{bib:Gibson}.

\begin{theorem}\label{thm:DVP}
Assume that $\alpha > 0$, $l, n \ge 0$, $1 \le m \le M_{d,l}$. Then
\begin{equation}\label{eq:jacobibase}
(-\Delta)^{\alpha/2} p_{l,m,n}(x) = \frac{2^\alpha \Gamma(1 + \tfrac{\alpha}{2} + n) \Gamma(\tfrac{\delta + \alpha}{2} + n)}
{n! \, \Gamma(\tfrac{\delta}{2} + n)} \, P_{l,m,n}(x)
\end{equation}
for all $x$ such that $|x| < 1$. In other words, the polynomials $P_{l,m,n}$ form a complete orthogonal system of eigenfunctions of the operator \mk{$f \mapsto (-\Delta)^{\alpha}(w f)$} in \mk{$L^2(w)$,} the weighted $L^2$ space with weight function $w(x) = (1 - |x|^2)_+^{\alpha/2}$.
\end{theorem}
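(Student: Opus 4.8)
The plan is to realise the radial profile of $p_{l,m,n}$ as a single Meijer G-function and then invoke Theorem~\ref{thm:DVG}. Writing $\delta = d + 2l$ and $p_{l,m,n}(x) = V_{l,m}(x)\,\phi(|x|^2)$ with
\[
\phi(r) = (1-r)_+^{\alpha/2}\,P_n^{(\alpha/2,\,\delta/2-1)}(2r-1),
\]
the first task is to compute the Mellin transform $\mellin\phi$. I would expand the Jacobi polynomial through the hypergeometric representation~\eqref{eq:jacobiq} (the form whose argument is $r=\tfrac{1+z}{2}$), integrate term by term against $r^{s-1}$ using the Beta integral $\int_0^1 r^{s+k-1}(1-r)^{\alpha/2}\,\d r = \Gamma(s+k)\Gamma(\tfrac\alpha2+1)/\Gamma(s+k+\tfrac\alpha2+1)$, and collect the resulting finite sum into a terminating ${}_3F_2$ evaluated at $1$.

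The crucial observation is that this ${}_3F_2$ is balanced (Saalsch\"utzian): its parameters satisfy the balance relation exactly, so the Pfaff--Saalsch\"utz theorem collapses it to a ratio of four gamma functions. After simplification this gives
\[
\mellin\phi(s) = \frac{(-1)^n\,\Gamma(1+\tfrac\alpha2+n)}{n!}\,
\frac{\Gamma(s)\,\Gamma(\tfrac{\delta}{2}+n-s)}{\Gamma(\tfrac{\delta}{2}-s)\,\Gamma(1+\tfrac\alpha2+n+s)},
\]
so that $\phi$ is a constant multiple of $G^{11}_{22}\big(\begin{smallmatrix}1-\tfrac{\delta}{2}-n,& 1+\tfrac\alpha2+n\\ 0,& 1-\tfrac{\delta}{2}\end{smallmatrix}\,\big|\,r\big)$. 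I would note that this falls under Condition~B ($p=q=2$, $m=n=1$), that the residues governing the behaviour for $r>1$ all vanish (the factor $1/\Gamma(\tfrac\delta2-s)$ kills the poles of $\Gamma(\tfrac\delta2+n-s)$), which confirms that $\phi$ is supported in $[0,1]$, and that Euler's transformation recovers $\phi(r)=(1-r)_+^{\alpha/2}P_n^{(\alpha/2,\delta/2-1)}(2r-1)$ as a consistency check.

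Next I would verify the hypotheses of Theorem~\ref{thm:DVG} under Condition~B: here $\nu = 1+\tfrac\alpha2 > 0$, $\overline\lambda = \tfrac\delta2+n$ and $\underline\lambda = 0$, so that~\eqref{eq:DVG:cond2} holds, and since the claim is for $|x|<1$ we have $|x|\ne 1$ as required. Applying Theorem~\ref{thm:DVG} expresses $(-\Delta)^{\alpha/2}p_{l,m,n}$ as $2^\alpha$ times the same constant times $V_{l,m}(x)\,G^{22}_{44}(\,\cdots\,|\,|x|^2)$. The parameter lists of this $G^{22}_{44}$ contain two coincidences: the value $-\tfrac\alpha2$ appears among both the upper denominator-type and lower numerator-type entries, and $1-\tfrac{\delta+\alpha}{2}$ appears among both the upper numerator-type and lower denominator-type entries. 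Cancelling these via the reduction rules~\eqref{reduction2} and~\eqref{reduction1} respectively reduces $G^{22}_{44}$ to a $G^{11}_{22}$, which by the expansion~\eqref{eq:G:F} together with~\eqref{eq:jacobiq} is again a multiple of $P_n^{(\alpha/2,\delta/2-1)}(2|x|^2-1)$. Collecting all constants --- the two factors of $(-1)^n$ cancel --- yields precisely the eigenvalue in~\eqref{eq:jacobibase}, and the case $x=0$ follows from the continuity clause of Theorem~\ref{thm:DVG} (both sides vanish when $l\ge1$ and are continuous when $l=0$).

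The main obstacle is the identification in the second step: recognising that the term-by-term Mellin integral produces a balanced ${}_3F_2(1)$ and applying Pfaff--Saalsch\"utz is what converts $\phi$ from an awkward weight-times-polynomial into a clean four-gamma Meijer G-function to which Theorem~\ref{thm:DVG} applies; everything afterwards is bookkeeping with the reduction formulas. A secondary technical point is that for non-generic parameters (for instance when $\tfrac\delta2+n-1$ is a positive integer, so that $b_i-a_j$ is a positive integer and the Meijer G-function is undefined in the sense of the paper's convention) one must argue by analytic continuation in $\alpha$ or $\delta$, using that both sides of~\eqref{eq:jacobibase} are analytic in these parameters.
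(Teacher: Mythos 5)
Your proposal is correct and follows essentially the same route as the paper: both identify the radial profile of $p_{l,m,n}$, up to the factor $(-1)^n\Gamma(1+\tfrac{\alpha}{2}+n)/n!$, with the Meijer G-function $G^{11}_{22}\big(\begin{smallmatrix}1-\tfrac{\delta}{2}-n, & 1+\tfrac{\alpha}{2}+n\\ 0, & 1-\tfrac{\delta}{2}\end{smallmatrix}\,\big|\,r\big)$, apply Theorem~\ref{thm:DVG} under Condition~B, cancel the two repeated parameters of the resulting $G^{22}_{44}$ via \eqref{reduction1} and \eqref{reduction2}, and read off the Jacobi polynomial from \eqref{eq:G:F}. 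The only difference is that you derive the G-function representation by a direct Mellin computation (term-by-term Beta integrals plus Pfaff--Saalsch\"utz applied to the resulting balanced ${}_3F_2(1)$), whereas the paper quotes formula 8.4.49.22 of \cite{Prudnikov_V3} together with the swap identity \eqref{Meijer_G_swap}; your version of this computation, the eigenvalue bookkeeping (including the cancellation of the $(-1)^n$ factors) and the treatment of $x=0$ all check out.
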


\section{Proofs}

%

\subsection{Technical results}

The following result essentially reduces the case of a general solid harmonic polynomial $V$ of degree $l \ge 0$ to $V(x) \equiv 1$ and $l = 0$.

\begin{proposition}\label{prop_Bochner}
Let $V(x)$ be a solid harmonic polynomial of degree $l\ge 0$.
Let $f$ and $\tilde f$ be two radial functions in $\r^d$ and $\r^{d+2l}$ with the same profile function.
\begin{enumerate}[label={\rm{(\roman*)}}]
\item\label{prop_Bochner_1}
If $0<\alpha<d$, $x\in \r^d \setminus \{0\}$ and the function $y\mapsto V(y)f(y)|x-y|^{\alpha-d}$ is in $L^1(\r^d)$, then 
\begin{equation}\label{eqn_Bochner_I_alpha}
\mk{(-\Delta)^{-\alpha/2}}(Vf)(x)=V(x) \mk{(-\Delta)^{-\alpha/2}}\tilde f(\tilde x),
\end{equation}
for any $\tilde x\in \r^{d+2l}$ such that $|\tilde x|=|x|$. \mk{Similar statement also holds for $x = 0$ if the function $y \mapsto (1 + |V(y)|) f(y) |y|^{\alpha-d}$ is in $L^1(\r^d)$.}
\item\label{prop_Bochner_2}
If $\alpha>0$, $x\in \r^d$,  $f(y)$ is smooth in a neighbourhood of $x$ and the function  $y\mapsto V(y)f(y)(1+|y|)^{-\alpha-d}$ is in $L^1(\r^d)$, then 
\begin{equation}\label{eqn_Bochner_Delta_alpha}
(-\Delta)^{\alpha/2}(Vf)(x)=V(x) (-\Delta)^{\alpha/2}\tilde f(\tilde x),
\end{equation}
for any $\tilde x\in \r^{d+2l}$ such that $|\tilde x|=|x|$. 
\end{enumerate}
\end{proposition}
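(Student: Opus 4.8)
The plan is to recognise both identities as pointwise instances of the Hecke--Bochner relation. For Schwartz functions, $(-\Delta)^{-\alpha/2}$ and $(-\Delta)^{\alpha/2}$ are radial Fourier multipliers, with symbols $|\xi|^{-\alpha}$ and $|\xi|^{\alpha}$, and the Hecke--Bochner formula asserts exactly that a radial multiplier applied to $V(x)\phi(|x|)$ returns $V(x)$ times the profile obtained by applying the same multiplier, but in dimension $d+2l$, to the radial lift $\tilde f$. The dimension shift $d \mapsto d+2l$ arises because the radial part of $\fourier_d(V\phi(|\cdot|))$ is governed by a Hankel transform of order $\tfrac{d}{2}-1+l=\tfrac{d+2l}{2}-1$, which is precisely the Hankel order attached to radial functions in $\r^{d+2l}$; the unimodular factor $\i^{-l}$ produced by the transform cancels upon inversion. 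Thus for Schwartz $\phi$ both \eqref{eqn_Bochner_I_alpha} and \eqref{eqn_Bochner_Delta_alpha} hold immediately, and the real work is the pointwise upgrade under the stated integrability hypotheses, which I would handle differently in the two parts.

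For part~\ref{prop_Bochner_1} the integrability assumption makes the Riesz integral absolutely convergent, so I would argue directly and avoid Fourier-side subtleties. Writing $z=r\omega$ in polar coordinates and using $V(z)=r^l V(\omega)$, the angular integral $\int_{S^{d-1}} V(\omega)\,|x-r\omega|^{\alpha-d}\,\d\sigma(\omega)$ is handled by the Funk--Hecke formula: since $|x-r\omega|^{\alpha-d}$ depends on $\omega$ only through $\omega\cdot(x/|x|)$, it equals $|x|^{-l}V(x)$ times a Gegenbauer-weighted integral. The resulting radial integral is then matched to the $(d+2l)$-dimensional Riesz potential of $\tilde f$ by the classical identity obtained from Rodrigues' formula: integrating by parts $l$ times turns $C_l^{(d/2-1)}(t)(1-t^2)^{(d-3)/2}$ into a constant multiple of $(2r|x|)^l(1-t^2)^{(d+2l-3)/2}$ acting on $(|x|^2-2r|x|t+r^2)^{-(d-\alpha)/2}$, raising both the weight exponent and the power of the kernel to their $(d+2l)$-dimensional values $\tfrac{d+2l-3}{2}$ and $\tfrac{d+2l-\alpha}{2}$. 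The constants reconcile through the ratio $\gamma_d(\alpha)/\gamma_{d+2l}(\alpha)=\pi^{-l}\Gamma(\tfrac{d+2l-\alpha}{2})/\Gamma(\tfrac{d-\alpha}{2})$, yielding \eqref{eqn_Bochner_I_alpha} for $x\neq 0$; the case $x=0$ follows from the same computation once the stronger integrability hypothesis is imposed.

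For part~\ref{prop_Bochner_2} the centered-difference definition \eqref{def_fractional_Laplacian} does not factor through $V$, so the direct computation is unavailable and I would instead upgrade from the Schwartz case by approximation. Cutting off and mollifying $f$ produces Schwartz approximants $f_j$; the Hecke--Bochner identity holds for each $Vf_j$ and its lift $\tilde f_j$, and it remains to pass to the limit in the singular integrals on both sides. Here the local smoothness of $f$ near $x$ controls the near-diagonal part, since $\Delta^k_y(Vf)(x)=O(|y|^k)$ with $k>\alpha$ makes the integrand integrable near $y=0$, while the hypothesis $V(y)f(y)(1+|y|)^{-\alpha-d}\in L^1(\r^d)$ controls the tail; dominated convergence then gives pointwise convergence, and the same two controls transfer to $\tilde f$ in $\r^{d+2l}$ because the lift preserves the radial profile (in particular, smoothness of the profile in $r^2$ near the origin transfers to smoothness of $\tilde f$ at the corresponding point). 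I expect this pointwise upgrade to be the main obstacle: one must justify interchanging the approximation limit with the principal-value/centered-difference limit under only the stated hypotheses, and verify that $\tilde f$ inherits exactly the regularity and decay needed for $(-\Delta)^{\alpha/2}\tilde f(\tilde x)$ to be well defined in the higher dimension.
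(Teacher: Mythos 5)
Your proposal is correct, and for part~\ref{prop_Bochner_1} it takes a genuinely different route from the paper. The paper never touches the angular integral: it first proves a convolution identity $(\fourier_d^{-1}\phi)*(Vf)(x)=V(x)\,(\fourier_{d+2l}^{-1}\tilde\phi)*\tilde f(\tilde x)$ for radial Schwartz multipliers $\phi$, $\tilde\phi$ with a common profile, by applying Bochner's relation twice; it then recovers both operators by letting the profile $(\varepsilon^2+r^2)^{\mp\alpha/2}e^{-\varepsilon r^2}$ tend to $r^{\mp\alpha}$ and passing to the limit in the sense of distributions, and finally extends from Schwartz $f$ to general $f$ by approximation in a weighted $L^1$ norm. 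Your Funk--Hecke/Rodrigues computation for part~\ref{prop_Bochner_1} replaces all of this Fourier-side scaffolding by a direct real-variable identity for the kernel $|x-y|^{\alpha-d}$; this is more elementary, works under exactly the stated $L^1$ hypothesis with no approximation of $f$ at all (Fubini plus $l$ integrations by parts), and your constant $\gamma_d(\alpha)/\gamma_{d+2l}(\alpha)=\pi^{-l}\Gamma(\tfrac{d+2l-\alpha}{2})/\Gamma(\tfrac{d-\alpha}{2})$ is right. The price is that the Gegenbauer--Rodrigues machinery degenerates in dimensions $d=1$ and $d=2$ (where $C_l^{(d/2-1)}$ needs the Chebyshev normalisation or a separate elementary treatment, and the paper's range is $d\ge 1$), and that the argument does not transfer to part~\ref{prop_Bochner_2}. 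For part~\ref{prop_Bochner_2} you are essentially on the paper's path (Schwartz case plus approximation), except that you invoke the Hecke--Bochner relation for the multiplier $|\xi|^{\alpha}$ directly; since $|\xi|^{\alpha}\hat g(\xi)$ is not the Fourier transform of a Schwartz function, some regularisation of the symbol such as the paper's $\phi_\varepsilon$ is still needed to make that step rigorous. Your identification of the limit interchange as the main obstacle, with the near-diagonal part controlled by $\Delta^k_y(Vf)(x)=O(|y|^k)$, $k>\alpha$, and the tail by the weighted $L^1$ hypothesis, matches precisely the details the paper leaves to the reader.
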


\noindent
The proof is based on the following result.

\begin{bochners_relation}[{Corollary on page 72 in~\cite{Stein_book}}]
Let $f$ and $\tilde f$ be two radial Schwartz functions in $\r^d$ and $\r^{d+2l}$ with the same profile function, and $x \in \r^d$, $\tilde{x} \in \r^{d+2l}$, $|x| = |\tilde{x}|$. Then 
\begin{equation}\label{Bochner's relation}
\fourier_d(Vf)(x)=\i^l V(x) \fourier_{d+2l} \tilde f(\tilde x). 
\end{equation}
\end{bochners_relation}

\begin{proof}[Proof of Proposition~\ref{prop_Bochner}]
Everywhere in this proof we will assume that $x\in \r^d$ and $\tilde x \in \r^{d+2l}$, and $|x|=|\tilde x|$. Our first goal is to establish the following result: For any radial Schwartz functions $\phi$ and $\tilde \phi$ 
on $\r^d$ and $\r^{d+2l}$ with the same profile it is true that 
\begin{equation}\label{eqn_Bochners_convolution}
(\fourier_d^{-1} \phi) * (Vf)(x)=V(x) (\fourier_{d+2l}^{-1} \tilde \phi)*\tilde f(\tilde x).  
\end{equation}
To prove this result, \mk{for $\tilde x \in \r^{d+2l}$} we define
\[
\tilde h(\tilde x):=\tilde \phi(\tilde x) \fourier_{d+2l} \tilde f(\tilde x),
\]
and denote by $h(x)$ the radial function in $\r^d$ having the same profile as $\tilde h$. By Bochner's relation~\eqref{Bochner's relation} we have
\[
\phi(x) \fourier_d(V f)(x) = \i^l \phi(x) V(x) \fourier_{d+2l} \tilde{f}(\tilde{x}) = \i^l V(x) h(x) ,
\]
and so
\[
(\fourier_d^{-1} \phi)*(Vf) =\fourier^{-1}_d \bigl[ \phi  \fourier_d (Vf) \bigr] = \i^l \fourier^{-1}_d \bigl[ V h  \bigr] .
\]
On the other hand,
\[
(\fourier^{-1}_{d+2l} \tilde \phi)*\tilde f = \fourier^{-1}_{d+2l} \bigl[\tilde \phi \fourier_{d+2l} \tilde f\bigr] = \fourier^{-1}_{d+2l} \tilde h .
\]
Identity~\eqref{eqn_Bochners_convolution} follows from connection between Fourier and inverse Fourier transforms, and another application of Bochner's relation~\eqref{Bochner's relation},
\[
\i^l \fourier^{-1}_d \bigl[ V h  \bigr](x) = (2 \pi)^{-d} \i^l \overline{\fourier_d \bigl[ V h  \bigr](x)} = (2 \pi)^{-d} V(x) \overline{\fourier_{d+2l} \tilde h(\tilde x)} = V(x) \fourier^{-1}_{d+2l} \tilde h(\tilde x) .
\]
Now we are ready to prove part~(i) of Proposition~\ref{prop_Bochner}. Let us consider first the case when $f$ is a Schwartz function. We set $\phi\mk{_\varepsilon}$ and $\tilde \phi\mk{_\varepsilon}$ to be functions on $\r^d$ and $\r^{d+2l}$ with the same profile function 
\begin{align*}
\phi\mk{_\varepsilon}(x)&=\tilde \phi\mk{_\varepsilon}(\tilde x)=(\varepsilon^2+r^2)^{-\alpha/2}e^{-\varepsilon r^2}, & |x|=|\tilde x|=r. 
\end{align*}
Note that both $\phi\mk{_\varepsilon}$ and $\tilde \phi\mk{_\varepsilon}$ are radial Schwartz functions and their profile \mk{increases} to $r^{-\alpha}$ as $\varepsilon \to 0^+$. \mk{In particular, $\phi_\varepsilon$ and $\tilde \phi_\varepsilon$ converge in the space of Schwartz distributions to $\phi(x) = |x|^{-\alpha}$ and $\tilde \phi(x) = |\tilde x|^{-\alpha}$, respectively. It follows that $\fourier_d^{-1} \phi$ and $\fourier_{d+2l}^{-1} \tilde \phi$ also converge as Schwartz distributions to kernel functions of Riesz potential operators $(-\Delta)^{-\alpha/2}$ in $\r^d$ and $\r^{d+2l}$, respectively (see~\eqref{def_I_alpha} and~\eqref{def_I_alpha_Fourier}).} Using~\eqref{eqn_Bochners_convolution} \mk{for $\phi_\varepsilon$ and $\tilde \phi_\varepsilon$ and passing to the limit $\varepsilon \to 0^+$,} we conclude that   
\eqref{eqn_Bochner_I_alpha} holds true \mk{when $f$ is a Schwartz function}. 

The general case (when $f$ is not a Schwartz function, but satisfies the integrability condition stated in part~\ref{prop_Bochner_1}) \mk{follows by approximation. Indeed, let $f_n$ be a sequence of radial Schwartz functions on $\r^d$ which converges to $f$ in the weighted $L^1(\r^d)$ norm, with weight function $y \mapsto |y|^l |x - y|^{\alpha - d}$ (or $y \mapsto 1 + |y|^l$ when $x = 0$). Then it is easy to see that the corresponding radial functions $\tilde f_n$ on $\r^{d+2l}$ converge to $\tilde f$ in the weighted $L^1(\r^{d + 2 l})$ norm with weight function $\tilde{y} \mapsto |\tilde{x} - \tilde{y}|^{\alpha - d}$, and~\eqref{eqn_Bochner_I_alpha} follows.} 

The proof of part~\ref{prop_Bochner_2} is obtained in the same way, except that \mk{we use} the functions  
\begin{align*}
\phi\mk{_\varepsilon}(x)& = \tilde \phi\mk{_\varepsilon}(\tilde x)=(\varepsilon^2+r^2)^{\alpha/2}e^{-\varepsilon r^2}, && |x|=|\tilde x|=r. 
\end{align*}
The details are left to the reader. 
\end{proof}

We often prove our results for a restricted range of parameters, and then extend them by analytic continuation. This is possible thanks to \mk{our next proposition}.

\begin{proposition}
\label{prop:morera}
Let $U$ be an open set in $\c^n$, $n \ge 1$. Assume that $z\mapsto f_z(x)$ is an analytic function on $U$ for almost all $x\in \r^d$. 
\begin{enumerate}[label={\rm{(\roman*)}}]
\item\label{prop:morera:1}
Let us fix $x \in \r^d$ and $0 < \alpha < d$. If 
\[
y \mapsto |y-x|^{-d+\alpha} \sup_{z\in U} |f_z(y)| \quad \text{ is in $L^1(\r^d)$,}
\]
then $\mk{(-\Delta)^{-\alpha/2}}f_z(x)$ is an analytic function of $z\in U$. 
\item\label{prop:morera:2}
Let us fix $x \in \r^d$ and $\alpha_0>0$.  
If 
\begin{equation}\label{eq:morera:condition}
y \mapsto (1+|y|)^{-d-\alpha_0} \sup_{z\in U} |f_z(y)| \quad \text{ is in $L^1(\r^d)$}
\end{equation}
and the function $y \mapsto f_z(y)$ is smooth in some small ball $B(x,r)$ with partial derivatives (of arbitrary order) bounded uniformly in $(y,z)$ on $B(x,r) \times U$, then 
$(-\Delta)^{\alpha/2}f_z(x)$ is an analytic function of $z \in U$ and of $\alpha$ in the half-plane $\re(\alpha)>\alpha_0$.
\end{enumerate}
\end{proposition}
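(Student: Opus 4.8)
The plan is to prove both parts by the same three-step scheme: first show that the relevant integral defines a finite, continuous function of the parameters by exhibiting an integrable majorant; then prove separate analyticity in each complex variable by Morera's theorem; and finally invoke Hartogs' theorem (separate analyticity implies joint analyticity) to conclude. In both parts the multiplicative constants in front of the integrals --- $1/\gamma_d(\alpha)$ in \eqref{def_I_alpha} and $1/\chi_{d,k}(\alpha)$ in \eqref{def_fractional_Laplacian} --- are analytic functions of $\alpha$, and in part~\ref{prop:morera:2} the latter is moreover zero-free on $0 < \re(\alpha) < k$; hence they never affect analyticity in $z$, and in the range of $\alpha$ we use they contribute an analytic, nonvanishing factor. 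Throughout, $z \mapsto f_z(y)$ is analytic for almost every $y$, so $\oint_{\partial T} f_z(y)\,\d z = 0$ for almost every $y$ and every triangle $T \subset U$, by Cauchy's theorem.

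For part~\ref{prop:morera:1} I would write $(-\Delta)^{-\alpha/2} f_z(x) = \gamma_d(\alpha)^{-1} \int_{\r^d} f_z(y)\,|x-y|^{\alpha-d}\,\d y$. The hypothesis provides the $y$-integrable majorant $|x-y|^{\alpha-d}\sup_{z\in U}|f_z(y)|$, which dominates the integrand uniformly in $z$; dominated convergence then yields finiteness and continuity in $z$. Fix all but one coordinate $z_j$ and let $T$ be any triangle inside the corresponding slice of $U$. Since $\partial T$ is compact of finite length and the majorant is integrable, Fubini's theorem justifies interchanging $\oint_{\partial T}\,\d z_j$ with $\int_{\r^d}\,\d y$; the inner contour integral $\oint_{\partial T} f_z(y)\,\d z_j$ vanishes for almost every $y$, so $\oint_{\partial T}(-\Delta)^{-\alpha/2}f_z(x)\,\d z_j = 0$. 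By Morera's theorem the Riesz potential is analytic in each $z_j$ separately, and Hartogs' theorem upgrades this to joint analyticity on $U$.

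For part~\ref{prop:morera:2} I would use the centered-difference definition \eqref{def_fractional_Laplacian}. Fix a compact set $K \subset \{\re(\alpha) > \alpha_0\}$ and choose an even integer $k > \sup_{\alpha\in K}\re(\alpha)$; the definition is independent of such $k$. The crux is a single integrable majorant for $|{-\Delta^k_y f_z(x)}|\,|y|^{-d-\alpha}$, uniform over $z \in U$ and $\alpha \in K$, obtained by splitting the integral at $|y| = r_0$ with $r_0 < 2r/k$. On $\{|y| < r_0\}$ every argument $x + (\tfrac{k}{2}-j)y$ lies in $B(x,r)$, so the uniform bounds on the derivatives of $f_z$, together with the fact that the symmetric $k$-th difference annihilates polynomials of degree below $k$, give $|\Delta^k_y f_z(x)| \le C|y|^k$; since $\re(\alpha) < k$, the product is bounded by the integrable function $C|y|^{k-d-\alpha}$. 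In particular the integral converges absolutely near the origin, so no principal value is needed and the limit $\varepsilon \to 0^+$ is harmless. On $\{|y| \ge r_0\}$ I would bound each of the finitely many terms $|f_z(x+(\tfrac{k}{2}-j)y)|$ using the change of variables $u = x + (\tfrac{k}{2}-j)y$ for $j \ne k/2$, together with the comparison $|y|^{-d-\alpha} \le C(1+|y|)^{-d-\alpha_0}$ valid for $|y|\ge r_0$ and $\re(\alpha)\ge\alpha_0$, reducing everything to the hypothesis \eqref{eq:morera:condition}; the single term $j = k/2$ is the constant $\binom{k}{k/2}f_z(x)$ against the convergent tail $\int_{|y|\ge r_0}|y|^{-d-\alpha}\,\d y$. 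With this uniform majorant in hand, continuity in $(z,\alpha)$ follows from dominated convergence, and separate analyticity follows by Morera exactly as before: interchanging a contour integral with $\int_{\r^d}$ is legitimate by Fubini, and the inner integral vanishes --- $\oint_{\partial T} f_z(x+(\tfrac{k}{2}-j)y)\,\d z_j = 0$ in the $z$-variables, and $\oint_{\partial T}|y|^{-d-\alpha}\,\d\alpha = 0$ in the $\alpha$-variable since $\alpha\mapsto |y|^{-d-\alpha}$ is entire. Hartogs' theorem applied on $U \times \{\alpha_0 < \re(\alpha) < k\}$ then gives joint analyticity.

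The main obstacle is the second part: producing the uniform integrable majorant for the singular integral that simultaneously tames the singularity at $y = 0$ and the decay at infinity. Near the origin this hinges on the $O(|y|^k)$ cancellation of the centered difference (hence on choosing $k > \re(\alpha)$ and on the uniform derivative bounds), while at infinity it hinges on translating the weight hypothesis \eqref{eq:morera:condition} through the affine substitutions $u = x + (\tfrac{k}{2}-j)y$ and the comparison of $|y|^{-d-\alpha}$ with $(1+|y|)^{-d-\alpha_0}$; keeping all constants uniform over the compact parameter set $K$ is the fiddly bookkeeping. Everything else --- continuity via dominated convergence, the Morera interchange via Fubini, and the passage to joint analyticity via Hartogs --- is routine once the majorant is established.
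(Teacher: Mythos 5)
Your proposal is correct and follows essentially the same route as the paper: a uniform integrable majorant (using the $O(|y|^k)$ cancellation of the centered difference near the origin and the weight hypothesis at infinity), analyticity under the integral sign in each variable separately, and Hartogs' theorem for joint analyticity. The only difference is that the paper cites a ready-made lemma (Lemma~1.31 in \cite{Samko_book}) for differentiating under the integral, whereas you reprove it inline via Morera and Fubini --- a presentational rather than mathematical distinction.
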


\noindent
The proof is based on the following result.

\begin{lemma}[Lemma 1.31 in \cite{Samko_book}]\label{lemma131} Let $D$ be an open set in $\c$, $\Omega \subseteq \r^d$ and $g(y,z)$ be an analytic function in $z\in D$ for almost all $y\in \Omega$. If $y \mapsto \sup_{z \in D} |g(y,z)|$ is in $L^1(\Omega)$ then the function 
$
z\mapsto \int_{\Omega} g(y,z) \d y
$
is analytic on $D$.
\end{lemma}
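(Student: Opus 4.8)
The plan is to prove analyticity of $F(z) := \int_\Omega g(y, z) \, \d y$ by combining Morera's theorem with Fubini's theorem, the standard route for integrals depending holomorphically on a parameter. Write $G(y) := \sup_{z \in D} |g(y, z)|$, which by hypothesis lies in $L^1(\Omega)$. First I would record that $F$ is well defined on all of $D$: for each fixed $z$ we have $|g(y, z)| \le G(y)$, so the integrand is absolutely integrable.

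Next I would establish that $F$ is continuous on $D$. Fix $z_0 \in D$ and let $z_n \to z_0$. Since $z \mapsto g(y, z)$ is analytic, hence continuous, for almost every $y$, we have $g(y, z_n) \to g(y, z_0)$ for a.e.\ $y$; the integrands are dominated by $G \in L^1(\Omega)$, so the dominated convergence theorem gives $F(z_n) \to F(z_0)$. This continuity guarantees that the contour integrals of $F$ used below are meaningful.

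The core step is to verify the Morera condition. Since analyticity is a local property and $D$ is open, it suffices to show that $\oint_{\partial T} F(z) \, \d z = 0$ for every closed triangle $T$ whose closure $\overline{T}$ is contained in $D$. I would write
\[
\oint_{\partial T} F(z) \, \d z = \oint_{\partial T} \Big( \int_\Omega g(y, z) \, \d y \Big) \d z
\]
and interchange the two integrations by Fubini's theorem. The interchange is justified because
\[
\int_{\partial T} \int_\Omega |g(y, z)| \, \d y \, |\d z| \le \mathrm{length}(\partial T) \cdot \int_\Omega G(y) \, \d y < \infty ,
\]
where the inner bound uses $|g(y,z)| \le G(y)$ uniformly in $z$. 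After the swap we obtain
\[
\oint_{\partial T} F(z) \, \d z = \int_\Omega \Big( \oint_{\partial T} g(y, z) \, \d z \Big) \d y .
\]
For almost every $y$, the map $z \mapsto g(y, z)$ is analytic on $D \supseteq \overline{T}$, so the Cauchy--Goursat theorem gives $\oint_{\partial T} g(y, z) \, \d z = 0$. Hence the whole expression vanishes, and Morera's theorem yields that $F$ is analytic on $D$.

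The main obstacle is the rigorous justification of Fubini's theorem, which quietly requires the joint measurability of $(y, z) \mapsto g(y, z)$ on $\Omega \times \partial T$; this follows from measurability in $y$ together with continuity in $z$, after which the uniform domination by $G$ supplies the absolute integrability needed. The only other point demanding care is that Morera's theorem needs $F$ to be continuous, which is precisely what the dominated-convergence argument in the second step provides.
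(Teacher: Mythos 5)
The paper offers no proof of this lemma: it is imported verbatim as Lemma~1.31 of \cite{Samko_book}, so there is no in-paper argument to compare against. Your Morera--Fubini--dominated-convergence proof is the standard argument for exactly this statement and is correct: the integrable envelope $G(y)=\sup_{z\in D}|g(y,z)|$ gives well-definedness and continuity of $F$, justifies the interchange of integration over $\Omega\times\partial T$, and Cauchy--Goursat annihilates the inner contour integral for a.e.\ $y$. The one hypothesis you invoke that is not literally written in the statement --- measurability of $g(\cdot,z)$ in $y$ --- is implicit (otherwise $\int_\Omega g(y,z)\,\d y$ would be meaningless), and you correctly note that joint measurability on $\Omega\times\partial T$ then follows from continuity in $z$.
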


\begin{proof}[Proof of Proposition~\ref{prop:morera}]
Let us prove part~\ref{prop:morera:1}. Let $z=(z_1,z_2,\dots,z_n) \in U$. 
Lemma \ref{lemma131} guarantees that the function $\mk{(-\Delta)^{-\alpha/2}}f_z(x)$ is analytic in each variable $z_i$, $1\le i \le n$. Applying Hartog's Theorem we obtain joint analyticity in $z \in U$.  

For the proof of part~\ref{prop:morera:2}, we observe that the boundedness of partial derivatives of the functions $f_z$ implies that for $y \in B(0,r)$
\[
|\Delta_y^k f_z(x)| \le C |y|^k, 
\]
for some constant $C=C(k,r)$ which does not depend on $z$. 
Therefore, the function
\[
y \mapsto \sup_{\substack{z \in U \\ \alpha_0 < \re \alpha < k}} \bigl| |y|^{-d-\alpha} \Delta^k_y f_z(x) \bigr|
\]
is integrable in $B(0,r)$. On the other hand, the same function is integrable in $\r^d \setminus B(0, r)$ by condition~\eqref{eq:morera:condition}. The desired result under additional condition $\re \alpha < k$ follows now from~\eqref{def_fractional_Laplacian} and Lemma~\ref{lemma131} (and Hartog's Theorem). Since $k$ was arbitrary, the proof is complete.
\end{proof}

\subsection{Main results}

\begin{proof}[Proof of Proposition~\ref{prop_riesz_mellin}]
When $l = 0$ and $V(x) \equiv 1$, the result follows rather easily from Fubini's theorem and the semigroup property of Riesz potential operators $(-\Delta)^{-(\alpha+\beta)/2} = (-\Delta)^{-\alpha/2} (-\Delta)^{-\beta/2}$. More precisely, by formula~(1.1.12) in~\cite{bib:l72} (see also formula~(8) on page 118 in~\cite{Stein_book}),
\begin{equation}
\label{eqn_riesz_semigroup}
\int_{\r^d} \frac{1}{|x - y|^{d - \alpha}} \, \frac{1}{|y|^{d - \beta}} \, \d y = \frac{\gamma_d(\alpha) \gamma_d(\beta)}{\gamma_d(\alpha + \beta)} \, \frac{1}{|x|^{d - \alpha - \beta}}
\end{equation}
for all $x \in \r^d$ and all $\alpha, \beta \in \c$ such that $\re \alpha, \re \beta > 0$ and $\re \alpha + \re \beta < d$. Applying Fubini's Theorem, using~\eqref{eqn_riesz_semigroup} for $\alpha \in (0, d)$ and $\beta = d - 2s$, and substituting \mk{$\tilde{s} + \tfrac{\alpha}{2}$} for $s$, we obtain
\begin{align*}
(-\Delta)^{-\alpha/2} f(x) & = \frac{1}{2 \pi \i} \int_{\lambda + \i \r} \mellin \phi(s) \left( \frac{1}{\gamma_d(\alpha)} \int_{\r^d} \frac{1}{|x - y|^{2s}} \, \frac{1}{|y|^{d - \alpha}} \, \d y \right) \d s \\
& = \frac{1}{2 \pi \i} \int_{\lambda + \i \r} \mellin \phi(s) \, \frac{\gamma_d(d-2s)}{\gamma_d(d+\alpha-2s)} \, \frac{1}{|x|^{2s-\alpha}} \, \d s \\
& = \frac{1}{2 \pi \i} \int_{\lambda - \alpha/2 + \i \r} \mellin \phi(\tilde{s} + \tfrac{\alpha}{2}) \, \frac{\gamma_d(d-\alpha-2\tilde{s})}{\gamma_d(d-2\tilde{s})} \, \frac{1}{|x|^{2\tilde{s}}} \, \d \tilde{s} ,
\end{align*}
which is equivalent to the desired result~\eqref{eqn_riesz_mellin}. The use of Fubini's Theorem is justified since for $s = \lambda + \i t$, the integral
\begin{align*}
\frac{1}{\gamma_d(\alpha)} \int_{\r^d} \left| \frac{1}{|x - y|^{2s}} \, \frac{1}{|y|^{d - \alpha}} \right| \d y & = \frac{1}{\gamma_d(\alpha)} \int_{\r^d} \frac{1}{|x - y|^{2\lambda}} \, \frac{1}{|y|^{d - \alpha}} \, \d y
\end{align*}
is finite and does not depend on $s$, while the integral in~\eqref{eqn_inverse_mellin} is absolutely convergent.

The case of general $V$ (that is, $l \ge 0$) reduces to $V(x) \equiv 1$ in dimension $\delta = d + 2 l$ by Proposition~\ref{prop_Bochner}.
\end{proof}

\begin{proof}[Proof of Theorem \ref{thm:IVG}]
We provide a detailed argument when Condition~A is satisfied. Suppose first that the parameters $p$, $q$, $m$, $n$, $\mathbf{a}$ and $\mathbf{b}$ satisfy Conditions~S and~A. In this case the desired result is a simple consequence of Proposition~\ref{prop_riesz_mellin}. Indeed, for $\lambda \in (-\underline{b}, 1 - \overline{a}) \cap (\tfrac{\alpha + l}{2}, \tfrac{d + l}{2})$ (such a number $\lambda$ exists by Condition~S and assumption~\eqref{eq:IVG:cond}) we have
\begin{align*}
\mk{(-\Delta)^{-\alpha/2}} f(x) & = V(x) \, \frac{1}{2\pi \i} \int_{\lambda - \alpha/2 + \i \r} \cG^{mn}_{pq} \Big( \,\begin{matrix} \mathbf{a} \\ \mathbf{b} \end{matrix}\, \Big\vert \, s + \tfrac{\alpha}{2} \Big ) \frac{\Gamma(s)\Gamma(\tfrac{\delta - \alpha}{2}-s)}{2^\alpha \Gamma(\tfrac{\alpha}{2}+s)\Gamma(\tfrac{\delta}{2}-s)} \, |x|^{-2s} \d s \\
& = 2^{-\alpha} V(x) G^{m+1,n+1}_{p+2,q+2}\Big( \,\begin{matrix} 1-\tfrac{\delta - \alpha}{2}, &\mathbf{a} + \tfrac{\alpha}{2}, & 0 \\ 0, &\mathbf{b} + \tfrac{\alpha}{2}, &1-\tfrac{\delta}{2} \end{matrix}\, \Big\vert \, |x|^2 \Big),
\end{align*}
where the second equality is a consequence of the definition~\eqref{eq:MG} of $\cG^{mn}_{pq}$ and the definition~\eqref{def_Meijer_G} of the Meijer G-function.

Relaxing Condition~S is possible by analytic continuation and Proposition~\ref{prop:morera}. \mk{Indeed,} let $U$ be a region with compact closure contained in the set of admissible parameters of $f$ under Condition~A. Using the definition~\eqref{def_Meijer_G}, with an appropriately modified contour of integration in the general case, one can prove that the constants in the asymptotic estimates~\eqref{eq:G:asymp} for the Meijer G-function can be chosen continuously with respect to the parameters; we omit the details. It follows that the supremum of $|f(y)|$ taken over all parameters from $U$ \mk{has, for some $\varepsilon > 0$, the following properties: (i) it} is locally bounded in $y \ne 0$\mk{; (ii)} it is $O(|y|^{-d + \varepsilon})$ as $y \to 0$\mk{; (iii) it is} $O(|y|^{-\alpha - \varepsilon})$ as $|y| \to \infty$. Hence, the assumptions of Proposition~\ref{prop:morera}\ref{prop:morera:1} are satisfied, and the desired result follows by analyticity of both sides of~\eqref{eq:DVG:I}.

\mk{We now sketch the proof in the remaining cases. When Conditions~S and~B are satisfied and $\nu > 1$, the argument is very similar, based on Proposition~\ref{prop_riesz_mellin}. In order to relax the condition on $\nu$ in the analytic continuation argument, one also needs the following regularity of the Meijer G-function near $1$: if $0 \le \nu \le 1$ and $\varepsilon > 0$, then
\begin{equation}\label{Meijer_G_asymptotics_1}
\begin{aligned}
 G^{mn}_{pq}\Big( 
\,\begin{matrix}
\mathbf{a} \\ \mathbf{b}
\end{matrix}\, \Big  \vert \, r 
\Big )= 
O(|r - 1|^{\nu - \varepsilon - 1}) && \text{as $r\to 1$,}
\end{aligned}
\end{equation}
again with constant in the asymptotic notation depending continuously on the parameters. This can be proved using the estimates for the hypergeometric function ${_p\tilde{F}_{q-1}}$ and~\eqref{eq:G:F}; we omit the details.}

\mk{Similarly, when Conditions~S and either~C or~D are satisfied, and in addition $-\underline{b} < \nu < 1 - \overline{a}$, then $\underline{\lambda} < \overline{\lambda}$ and Proposition~\ref{prop_riesz_mellin} applies. The conditions on $\mathbf{a}$ and $\mathbf{b}$ again can be relaxed by analytic continuation.}
\end{proof}

\begin{proof}[Proof of Theorem~\ref{thm:DVG}]
Assume first that $x \ne 0$ and the parameters $\alpha$, $p$, $q$, $m$, $n$, $\mathbf{a}$ and $\mathbf{b}$ satisfy Condition~A and in addition
\begin{equation}
\label{eq:DVG:cond_prime}
\begin{aligned}
0 < \alpha & < d, &\qquad 2(1 - \overline{a}) & > l , &\qquad -2 \underline{b} & < d + l - \alpha .
\end{aligned}
\end{equation}
Let us denote the function in the right-hand side of~\eqref{eq:DVG:D} by $g(x)$. Then $g(y)$ is continuous in $y \ne 0$, and by~\eqref{eq:DVG:cond_prime} and~\eqref{eq:G:asymp}, for some $\varepsilon > 0$ we have $|g(y)| = O(|y|^{-d + \varepsilon})$ as $y \to 0^+$ and $|g(y)| = O(|y|^{-\alpha - \varepsilon})$ as $y \to \infty$ (because $l + 2 \underline{b} - \alpha > -d$ and $l - 2 (1 - \overline{a}) - \alpha < -\alpha$). Conditions~\eqref{eq:DVG:cond_prime} ensure also that $2 (1 - \overline{a} + \tfrac{\alpha}{2}) > \alpha + l$ and $-2(\underline{b} - \tfrac{\alpha}{2}) < d + l$. It follows that we can compute $(-\Delta)^{-\alpha/2} g$ via Theorem~\ref{thm:IVG}: performing this computation and using reduction formulas~\eqref{reduction1} and~\eqref{reduction2} we check that:
\[
\begin{aligned}
(-\Delta)^{-\alpha/2} g(x) & = V(x) G^{m+2,n+2}_{p+4,q+4}\Big( 
\,\begin{matrix}
1-\tfrac{\delta - \alpha}{2}, & 1 - \tfrac{\delta}{2}, & \mathbf{a}, & 0, & \tfrac{\alpha}{2} \\ 0, & \tfrac{\alpha}{2} , &\mathbf{b}, &1-\tfrac{\delta - \alpha}{2}, & 1 - \tfrac{\delta}{2}
\end{matrix}\, \Big  \vert \, |x|^2 
\Big ) \\
& = V(x) G^{mn}_{pq}\Big( 
\,\begin{matrix}
\mathbf{a} \\ \mathbf{b}
\end{matrix}\, \Big  \vert \, |x|^2 
\Big ) = f(x) .
\end{aligned}
\]
To finish the proof we only need to note that $g$ is continuous at $x$ and apply Proposition~\ref{prop_Delta_inverse_I_alpha}. If $g$ is continuous at $0$, we also have~\eqref{eq:DVG:D} at $x = 0$.

Relaxing the additional conditions is done using Proposition~\ref{prop:morera} in a very similar way as in the proof of Theorem~\ref{thm:IVG}. Both $f$ and $g$ are analytic functions of $\alpha$, $\mathbf{a}$ and $\mathbf{b}$ in the admissible region specified by condition~\eqref{eq:DVG:cond}. From the estimates~\eqref{eq:G:asymp} of the Meijer G-function it follows that $f$ satisfies the assumptions of Proposition~\ref{prop:morera}\ref{prop:morera:2} for every domain $U$ with compact closure in the admissible region.

\mk{The remaining cases, when Conditions~B, C or D are satisfied, are very similar, and we only sketch the argument. Under Condition~B, one initially assumes that $\nu > 1 + \alpha$ and then uses also~\eqref{Meijer_G_asymptotics_1} for the analytic continuation. If Conditions~C or~D hold, the analytic continuation part requires no modifications, but in order to use Theorem~\ref{thm:IVG}, one needs to initially assume that either $1 + 2 \tfrac{\nu - 1 - \alpha}{q - p} > l$ when Condition~C is satisfied, or $1 - 2 \tfrac{\nu - 1 - \alpha}{p - q} < d + l - \alpha$ when Condition~D holds. We omit the details.}
\end{proof}

\subsection{Full space}

\begin{proof}[Proof of Corollary~\ref{cor:power}]
The result follows immediately from Theorem~\ref{thm:IVG} (when $-d < \alpha < 0$) or Theorem~\ref{thm:DVG} (when $\alpha > 0$) by using~\eqref{eq:power:G}.
\end{proof}

\begin{proof}[Proof of Corollary~\ref{cor:DVF}]
Denote $\mathbf{b} = (b_1, \dots, b_{q-1}, \tfrac{\delta}{2})$. Suppose first that none of $a_j$ is a non-positive integer. Using~\eqref{eq:FG}, Theorem~\ref{thm:IVG} (when $-d < \alpha < 0$) or Theorem~\ref{thm:DVG} (when $\alpha > 0$) and then the reduction formulas~\eqref{reduction1} and \eqref{reduction2}, we obtain the desired result. The restriction that none of $a_j$ is a non-positive integer can be relaxed by analytic continuation using Proposition~\ref{prop:morera}.
\end{proof}


\subsection{Unit ball and its complement}

\begin{proof}[Proof of Corollary~\ref{cor:powerball}]
The result follows immediately from Theorem~\ref{thm:IVG} (when $-d < \alpha < 0$) or Theorem~\ref{thm:DVG} (when $\alpha > 0$) by using~\eqref{eq:powerball:G} and~\eqref{eq:powerballc:G}.
\end{proof}

\begin{proof}[Proof of Corollary~\ref{cor_unit_ball_Delta_zero}]
From formula~8.4.49.22 in~\cite{Prudnikov_V3} we find \mk{that} 
\[
\begin{aligned}
f(x) & = G^{20}_{22}\Big( 
\,\begin{matrix}
\tfrac{\alpha}{2}, & 1 + \rho + \sigma - \tfrac{\alpha}{2} \\ 0, & \rho
\end{matrix}\, \Big  \vert \, |x|^2 
\Big ).
\end{aligned}
\]
Applying Theorem~\ref{thm:IVG} (when $-d < \alpha < 0$) or Theorem~\ref{thm:DVG} (when $\alpha > 0$), and simplifying the result we obtain
\begin{align*}
(-\Delta)^{\alpha/2} f(x)&=
2^{\alpha}V(x)
G^{31}_{44}\Big( 
\,\begin{matrix}
1-\tfrac{\delta + \alpha}{2}, & 0, & 1 + \rho + \sigma - \alpha, & -\tfrac{\alpha}{2} \\ 
0, & -\tfrac{\alpha}{2}, & \rho - \tfrac{\alpha}{2}, & 1-\tfrac{\delta}{2}
\end{matrix}\, \Big  \vert \, |x|^2 
\Big )\\
&=
2^{\alpha}V(x)
G^{11}_{22}\Big( 
\,\begin{matrix}
1-\tfrac{\delta + \alpha}{2}, & 1 + \rho + \sigma - \alpha \\  
\rho - \tfrac{\alpha}{2}, & 1-\tfrac{\delta}{2}
\end{matrix}\, \Big  \vert \, |x|^2 
\Big ),
\end{align*}
where in the second step we have used reduction formula~\eqref{reduction2}. Formula~\eqref{prop2_f2} follows 
from the above expression and~\eqref{eq:G:F}.
\end{proof}


\mk{In the next result, we will use the following property of Meijer G-function, which follows easily from the definition~\eqref{def_Meijer_G} as the inverse Mellin transform: if $k$ is an integer, $\mathbf{a}' = (a_2, \dots, a_p)$ and $\mathbf{b}' = (b_1, \dots, b_{q-1})$, then
\begin{equation}
\label{Meijer_G_swap}
G^{mn}_{pq}\Big( 
\,\begin{matrix}
c, & \mathbf{a}' \\ \mathbf{b}', & c+k
\end{matrix}\, \Big  \vert \, r
\Big )=(-1)^k G^{m+1,n-1}_{p,q}\Big( 
\,\begin{matrix}
\mathbf{a}', & c \\ c+k, & \mathbf{b}'
\end{matrix}\, \Big  \vert \, r
\Big ).
\end{equation}}

\begin{proof}[Proof of Theorem~\ref{thm:DVP}]
Define $\delta := d + 2 l$, $V(x) := V_{l,m}(x)$,
\[
f(x) := \frac{n! \, p_{l,m,n}(x)}{\Gamma(1 + \tfrac{\alpha}{2} + n)} = V(x) (1-|x|^2)_+^{\alpha/2}
{_2\tilde{F}_1}\Big(\,\begin{matrix} -n, & \tfrac{\delta + \alpha}{2} + n \\ \multicolumn{2}{c}{1 + \tfrac{\alpha}{2}} \end{matrix}\, \Big \vert \, 1-|x|^2 \Big) 
\]
(the equality follows from the definitions of $p_{l,m,n}$ and $P_{l,m,n}$ and the first expression for the Jacobi polynomial in~\eqref{eq:jacobiq}) and
\[
g(x) := \frac{n! \, P_{l,m,n}(x)}{\Gamma(\tfrac{\delta}{2} + n)} = (-1)^n V(x)
{_2\tilde{F}_1}\Big(\,\begin{matrix} \tfrac{\delta + \alpha}{2} + n , & -n \\ \multicolumn{2}{c}{\tfrac{\delta}{2}} \end{matrix}\, \Big \vert \, |x|^2 \Big)
\]
(here we used the second expression for the Jacobi polynomial in~\eqref{eq:jacobiq}). By formula~8.4.49.22 in~\cite{Prudnikov_V3} and~\eqref{Meijer_G_swap}, we have
\[
\begin{aligned}
f(x) & = V(x) G^{20}_{22}\Big( 
\,\begin{matrix}
1 + \tfrac{\alpha}{2} + n, & 1 - \tfrac{\delta}{2} - n \\ 1 - \tfrac{\delta}{2}, & 0
\end{matrix}\, \Big  \vert \, |x|^2 
\Big ) \\
& = (-1)^n V(x) G^{11}_{22}\Big( 
\,\begin{matrix}
1 - \tfrac{\delta}{2} - n, & 1 + \tfrac{\alpha}{2} + n \\ 0, & 1 - \tfrac{\delta}{2}
\end{matrix}\, \Big  \vert \, |x|^2 
\Big ) .
\end{aligned}
\]
Using Theorem~\ref{thm:DVG} and the reduction formula~\eqref{reduction2} we find that when $|x| \ne 1$,
\[
(-\Delta)^{\alpha/2} f(x) =
(-1)^n 2^{\alpha}V(x)
G^{11}_{22}\Big( 
\,\begin{matrix}
1 - \tfrac{\delta + \alpha}{2} - n, & 1 + n , \\ 
0, & 1-\tfrac{\delta}{2}
\end{matrix}\, \Big  \vert \, |x|^2 
\Big ) .
\]
By~\eqref{eq:G:F}, when $|x| < 1$,
\[
\begin{aligned}
(-\Delta)^{\alpha/2} f(x) & = \frac{(-1)^n 2^\alpha \Gamma(\tfrac{\delta + \alpha}{2} + n)}{n!} \, V(x)
{_2\tilde{F}_1}\Big(\,\begin{matrix} \tfrac{\delta + \alpha}{2} + n , & -n \\ \multicolumn{2}{c}{\tfrac{\delta}{2}} \end{matrix}\, \Big \vert \, |x|^2 \Big) \\
& = \frac{2^\alpha \Gamma(\tfrac{\delta + \alpha}{2} + n)}{n!} \, g(x) ,
\end{aligned}
\]
as desired.
\end{proof}

\end{document}